\theoremstyle{plain}
\newtheorem{theorem}{Theorem}[section]
\newtheorem{proposition}[theorem]{Proposition}
\newtheorem{lemma}[theorem]{Lemma}
\theoremstyle{definition}
\theoremstyle{remark}
\newtheorem{remark}[theorem]{Remark}
\newcommand{\vect}[1]{\ensuremath{\mathbf{#1}}} % vector
\newcommand{\card}[1]{\ensuremath{\lvert{#1}\rvert}} % cardinality or length
\DeclareMathOperator{\range}{Im} % range
\DeclareMathOperator{\ess}{ess} % essential arity
\DeclareMathOperator{\essl}{ess^{<}} % maximum essential arity of a minor
\DeclareMathOperator{\gap}{gap} % arity gap
\DeclareMathOperator{\qa}{qa} % quasi-arity
\newcommand{\oddsupp}{\ensuremath{\mathrm{oddsupp}}} % oddsupp
\newcommand{\Aneq}[1][n]{\ensuremath{A^{#1}_{=}}}
\begin{document}
\title{Decompositions of functions based on arity gap}

\author{Miguel Couceiro}
\address[M. Couceiro]{Mathematics Research Unit \\
University of Luxembourg \\
6, rue Richard Coudenhove-Kalergi \\
L-1359 Luxembourg \\
Luxembourg}
\email{miguel.couceiro@uni.lu}

\author{Erkko Lehtonen}
\address[E. Lehtonen]{Computer Science Research Unit \\
University of Luxembourg \\
6, rue Richard Coudenhove-Kalergi \\
L-1359 Luxembourg \\
Luxembourg}
\email{erkko.lehtonen@uni.lu}

\author[Tamás Waldhauser]{Tamás Waldhauser$^\text{1}$}
\address[T. Waldhauser]{Mathematics Research Unit \\
University of Luxembourg \\
6, rue Richard Coudenhove-Kalergi \\
L-1359 Luxembourg \\
Luxembourg
\and
Bolyai Institute \\
University of Szeged \\
Aradi v\'{e}rtan\'{u}k tere 1 \\
H-6720 Szeged \\
Hungary}
\email{twaldha@math.u-szeged.hu}
\thanks{$^\text{1}$The present project is supported by the National Research Fund, Luxembourg, and cofunded under the Marie Curie Actions of the European Commission (FP7-COFUND), and supported by the Hungarian National Foundation for Scientific Research under grant no.\ K77409.}

\date{\today}

\keywords{Arity gap, variable identification minors, Boolean groups}

\subjclass[2000]{Primary: 08A40}

\begin{abstract}
We study the arity gap of functions of several variables defined on an arbitrary set $A$ and valued in another set $B$. The arity gap of such a function is the minimum decrease in the number of essential variables when variables are identified. We establish a complete classification of functions according to their arity gap, extending existing results for finite functions. This classification is refined when the codomain $B$ has a group structure, by providing unique decompositions into sums of functions of a prescribed form. As an application of the unique decompositions, in the case of finite sets we count, for each $n$ and $p$, the number of $n$-ary functions that depend on all of their variables and have arity gap $p$.
\end{abstract}

\maketitle

%%%%%%%%%%%%%%%%%%%%%%%%
%%%%% Introduction %%%%%
%%%%%%%%%%%%%%%%%%%%%%%%

\section{Introduction}

Essential variables of functions have been investigated in multiple-valued logic and computer science, especially, concerning the distribution of values of functions whose variables are all essential (see, e.g., \cite{Davies,Salomaa,Yablonski}), the process of substituting constants for variables (see, e.g., \cite{Cimev1981,Cimev1986,Lupanov,Salomaa,Solovjev}), and the process of substituting variables for variables (see, e.g., \cite{CL2007,DK,Salomaa,Willard}).

The latter line of study goes back to the 1963 paper by Salomaa~\cite{Salomaa} who considered the following problem: How does identification of variables affect the number of essential variables of a given function? The minimum decrease in the number of essential variables of a function $f \colon A^n \to B$ ($n \geq 2$) which depends on all of its variables is called the arity gap of $f$. Salomaa \cite{Salomaa} showed that the arity gap of any Boolean function is at most $2$. This result was extended to functions defined on arbitrary finite domains by Willard \cite{Willard}, who showed that the same upper bound holds for the arity gap of any function $f \colon A^n \to B$, provided that $n > \card{A}$. In fact, he showed that if the arity gap of such a function $f$ is $2$, then $f$ is totally symmetric. 
Salomaa's \cite{Salomaa} result on the upper bound for the arity gap of Boolean functions was strengthened in~\cite{CL2007}, where Boolean functions were completely classified according to their arity gap. In~\cite{CL2009}, by making use of tools provided by Berman and Kisielewicz \cite{BK} and Willard \cite{Willard}, a similar explicit classification was obtained for all pseudo-Boolean functions, i.e., functions $f \colon \{0,1\}^n \to B$, where $B$ is an arbitrary set. This line of study culminated in a complete classification of functions $f \colon A^n \to B$ with finite domains according to their arity gap in terms of so-called quasi-arity; see Theorem~\ref{thm:gap}, first presented in~\cite{CL2009}.

Although Theorem~\ref{thm:gap} was originally stated in the setting of functions $f \colon A^n \to B$ with finite domains, it actually holds for functions with arbitrary, possibly infinite domains (see Remark~\ref{rem:generalization} in Section~\ref{sec:gap}). Alas, this classification is not quite explicit. However, as we will see in Section~\ref{sec:decomposition}, provided that the codomain $B$ has a group structure, this classification can be refined to a unique decomposition of functions as a sum of functions of a prescribed type (see Theorem~\ref{thm:sum}). This result can be further strengthened by assuming that $B$ is a Boolean group (see Section~\ref{sec:gap2}). As an application of the unique decomposition theorem, in Section~\ref{sec:enumerate}, assuming that sets $A$ and $B$ are finite, we will count for each $n$ and $p$ the number of functions $f \colon A^n \to B$ that depend on all of their variables and have arity gap $p$.

%%%%%%%%%%%%%%%%%%%%%%%%%%%%%%%%%%%%%%%%%%%
%%%%% Essential arity and quasi-arity %%%%%
%%%%%%%%%%%%%%%%%%%%%%%%%%%%%%%%%%%%%%%%%%%

\section{Essential arity and quasi-arity}
\label{sec:minors}

Throughout this paper, let $A$ and $B$ be arbitrary sets with at least two elements. A \emph{$B$-valued function} (\emph{of several variables}) \emph{on $A$} is a mapping $f \colon A^n \to B$ for some positive integer $n$, called the \emph{arity} of $f$. $A$-valued functions on $A$ are called \emph{operations on $A$.} Operations on $\{0,1\}$ are called \emph{Boolean functions.} For an arbitrary $B$, we refer to $B$-valued functions on $\{0, 1\}$ as \emph{pseudo-Boolean functions.}

A \emph{partial function} from $X$ to $Y$ is a map $f \colon S \to Y$ for some $S \subseteq X$. In the case that $S = X$, we speak of \emph{total functions.} Thus, an $n$-ary \emph{partial function} from $A$ to $B$ is a map $f \colon S \to B$ for some $S \subseteq A^n$.

Let $f \colon S \to B$ be a partial function with $S \subseteq A^n$. We say that the $i$-th variable is \emph{essential} in $f$, or $f$ \emph{depends} on $x_i$, if there is a pair
\[
((a_1, \ldots, a_{i-1}, a_i, a_{i+1}, \ldots, a_n), (a_1, \ldots, a_{i-1}, b, a_{i+1}, \ldots, a_n)) \in S^2,
\]
called a \emph{witness of essentiality} of $x_i$ in $f$, such that
\[
f(a_1, \ldots, a_{i-1}, a_i, a_{i+1}, \ldots, a_n) \neq f(a_1, \ldots, a_{i-1}, b, a_{i+1}, \ldots, a_n).
\]
The number of essential variables in $f$ is called the \emph{essential arity} of $f$, and it is denoted by $\ess f$. If $\ess f = m$, we say that $f$ is \emph{essentially $m$-ary.} Note that the only essentially nullary total functions are the constant functions, but this does not hold in general for partial functions.

For $n \geq 2$, define
\[
\Aneq := \{(a_1, \dots, a_n) \in A^n : \text{$a_i = a_j$ for some $i \neq j$}\}.
\]
We also define $\Aneq[1] := A$. Note that if $A$ has less than $n$ elements, then $\Aneq = A^n$.

\begin{lemma}
\label{lem:diagwitness}
Let $f \colon A^n \to B$, $n \geq 3$, $\ess f < n$. Then for each essential variable $x_i$, there exists a pair of points $(\vect{a}, \vect{b}) \in (\Aneq)^2$ that is a witness of essentiality of $x_i$ in $f$.
\end{lemma}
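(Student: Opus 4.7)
My plan is to take any witness of essentiality of $x_i$ and modify it in a single non-essential coordinate so as to force both points into $\Aneq$, using the extra room provided by $n \geq 3$.

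First, since $\ess f < n$ while $x_i$ is essential, there must exist some index $j \neq i$ with $x_j$ non-essential in $f$. The hypothesis $n \geq 3$ then allows me to fix a further index $l \in \{1, \dots, n\} \setminus \{i, j\}$; this $l$ will supply the ``collision value'' that puts the constructed tuples into $\Aneq$.

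Next, I would pick any witness $(\vect{a}, \vect{b})$ of essentiality of $x_i$ in $f$, with $\vect{a} = (a_1, \dots, a_n)$ and $\vect{b}$ agreeing with $\vect{a}$ outside position $i$, and form $\vect{a}'$ and $\vect{b}'$ by replacing the $j$-th coordinate of each of $\vect{a}$ and $\vect{b}$ with the value $a_l$. Three easy checks then finish the argument: (i) $\vect{a}'$ and $\vect{b}'$ still agree outside position $i$, because the only additional change was at position $j$ and it was the same in both; (ii) $f(\vect{a}') = f(\vect{a})$ and $f(\vect{b}') = f(\vect{b})$, since only the non-essential position $j$ was altered, so $f(\vect{a}') \neq f(\vect{b}')$; (iii) both $\vect{a}'$ and $\vect{b}'$ exhibit the value $a_l$ at the two distinct positions $j$ and $l$, so both lie in $\Aneq$. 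The pair $(\vect{a}', \vect{b}')$ is then the required witness.

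There is no serious obstacle; the only thing to get right is the choice of substitute value. It must be one that already occurs in a coordinate we do not subsequently touch, and this is exactly what $a_l$ with $l \neq i, j$ provides. This also clarifies the role of the hypothesis $n \geq 3$: for $n = 2$, the only other coordinate besides $i$ is $j$, leaving no spare coordinate $l$ from which to borrow a repeated value, and indeed the conclusion fails in that case.
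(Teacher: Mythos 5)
Your proof is correct and follows essentially the same argument as the paper: locate an inessential coordinate, overwrite it (in both points of a witness of essentiality of $x_i$) with the value already present at a third coordinate distinct from $i$, and observe that the witness property is preserved while the forced repetition places both points in $\Aneq$. The paper merely normalizes by taking the inessential variable to be $x_n$, which is an inessential difference.
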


\begin{proof}
Since $\ess f < n$, $f$ has an inessential variable. Assume, without loss of generality, that $x_n$ is inessential in $f$. Assume that $x_i$ is an essential variable in $f$, and let
\[
((a_1, \ldots, a_{i-1}, a_i, a_{i+1}, \ldots, a_n), (a_1, \ldots, a_{i-1}, b, a_{i+1}, \ldots, a_n)) \in (A^n)^2
\]
be a witness of essentiality of $x_i$ in $f$. Let $j \in \{1, \dots, n-1\} \setminus \{i\}$. We have that
\begin{multline*}
f(a_1, \ldots, a_{i-1}, a_i, a_{i+1}, \ldots, a_{n-1}, a_j)
= f(a_1, \ldots, a_{i-1}, a_i, a_{i+1}, \ldots, a_{n-1}, a_n) \\
\neq f(a_1, \ldots, a_{i-1}, b, a_{i+1}, \ldots, a_{n-1}, a_n)
= f(a_1, \ldots, a_{i-1}, b, a_{i+1}, \ldots, a_{n-1}, a_j),
\end{multline*}
where the two equalities hold by the assumption that $x_j$ is inessential in $f$, and the inequality holds by our choice of a witness of essentiality of $x_i$ in $f$.
Thus,
\[
((a_1, \ldots, a_{i-1}, a_i, a_{i+1}, \ldots, a_{n-1}, a_j), (a_1, \ldots, a_{i-1}, b, a_{i+1}, \ldots, a_{n-1}, a_j)) \in (\Aneq)^2
\]
is a witness of essentiality of $x_i$ in $f$.
\end{proof}

We say that a function $f \colon A^n \to B$ is obtained from $g \colon A^m \to B$ by \emph{simple variable substitution,} or $f$ is a \emph{simple minor} of $g$, if there is a mapping $\sigma \colon \{1, \ldots, m\} \to \{1, \ldots, n\}$ such that
\[
f(x_1, \dots, x_n) = g(x_{\sigma(1)}, \ldots, x_{\sigma(m)}).
%f = g(x_{\sigma(1)}^{(n)}, \ldots, x_{\sigma(m)}^{(n)}).
\]
If $\sigma$ is not injective, then we speak of \emph{identification of variables.} If $\sigma$ is not surjective, then we speak of \emph{addition of inessential variables.} If $\sigma$ is a bijection, then we speak of \emph{permutation of variables.} 

The simple minor relation constitutes a quasi-order $\leq$ on the set of all $B$-valued functions of several variables on $A$ which is given by the following rule: $f \leq g$ if and only if $f$ is obtained from $g$ by simple variable substitution. If $f \leq g$ and $g \leq f$, we say that $f$ and $g$ are \emph{equivalent,} denoted $f \equiv g$. If $f \leq g$ but $g \not\leq f$, we denote $f < g$. It can be easily observed that if $f \leq g$ then $\ess f \leq \ess g$, with equality if and only if $f \equiv g$. For background, extensions and variants of the simple minor relation, see, e.g., \cite{Couceiro,CP,FH,Lehtonen,LS,Pippenger,Wang,Zverovich}.

Consider $f \colon A^n \to B$. Any function $g \colon A^n \to B$ satisfying $f|_{\Aneq} = g|_{\Aneq}$ is called a \emph{support} of $f$. The \emph{quasi-arity} of $f$, denoted $\qa f$, is defined as the minimum of the essential arities of the supports of $f$, i.e., $\qa f = \min_g \ess g$, where $g$ ranges over the set of all supports of $f$. If $\qa f = m$, we say that $f$ is \emph{quasi-$m$-ary.} 

The following two lemmas were proved in~\cite{CL2009}.

\begin{lemma}
\label{lem:qaess}
For every function $f \colon A^n \to B$, $n \neq 2$, we have $\qa f = \ess f|_{\Aneq}$.
\end{lemma}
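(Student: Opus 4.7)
The case $n=1$ is immediate: there $\Aneq[1]=A$, so $f$ is its own only support and $\qa f=\ess f=\ess f|_{\Aneq}$. My plan is thus to treat $n\geq 3$ and prove the two inequalities $\qa f\geq\ess f|_{\Aneq}$ and $\qa f\leq\ess f|_{\Aneq}$ separately.

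For the first inequality I would observe that if $g$ is any support of $f$ then $g$ and $f$ coincide on $\Aneq$, so every witness of essentiality of a variable $x_i$ in $f|_{\Aneq}$—a pair of points in $(\Aneq)^2$—is also a witness of essentiality of $x_i$ in $g$. Thus $\ess g\geq\ess f|_{\Aneq}$, and minimizing over all supports yields $\qa f\geq\ess f|_{\Aneq}$.

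For the reverse inequality I would set $m:=\ess f|_{\Aneq}$ and assume, without loss of generality, that $x_1,\dots,x_m$ are essential and $x_{m+1},\dots,x_n$ inessential in $f|_{\Aneq}$. The goal is to exhibit a support $g$ of $f$ with $\ess g\leq m$. If $m=n$, simply take $g=f$. If $1\leq m<n$, put
\[
g(x_1,\dots,x_n):=f(x_1,\dots,x_m,x_1,x_1,\dots,x_1),
\]
so that $g$ depends on at most $x_1,\dots,x_m$. If $m=0$, fix any $c\in A$ and let $g$ be the constant function with value $f(c,c,\dots,c)$.

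The main obstacle will be to verify that this $g$ really is a support of $f$, i.e., that $g(\vect a)=f(\vect a)$ for every $\vect a=(a_1,\dots,a_n)\in\Aneq$. The plan here is a coordinate-by-coordinate path argument. In the case $1\leq m<n$, starting from the tuple $(a_1,\dots,a_m,a_1,\dots,a_1)$ I would replace, in positions $m+1,m+2,\dots,n$, the value $a_1$ by $a_{m+1},a_{m+2},\dots,a_n$ one coordinate at a time. Every intermediate tuple lies in $\Aneq$: as long as some trailing coordinate still equals $a_1$ it coincides with position $1$, and the final tuple $\vect a$ is in $\Aneq$ by hypothesis. Each single-coordinate step is then a pair in $(\Aneq)^2$ differing only at some position $j>m$, so the inessentiality of $x_j$ in $f|_{\Aneq}$ preserves the value of $f$; iterating gives $g(\vect a)=f(\vect a)$. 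In the case $m=0$ I would use an analogous path, connecting an arbitrary $\vect a\in\Aneq$ to $(c,\dots,c)$ while always keeping at least two positions holding a common value—this is where the hypothesis $n\geq 3$ is used crucially, to guarantee that such a coincidence can be maintained throughout—showing that $f$ is constant on $\Aneq$ and hence that the chosen $g$ is a support.
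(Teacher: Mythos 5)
Your argument is correct. Note that the paper itself does not prove this lemma at all: it is quoted from~\cite{CL2009} (``The following two lemmas were proved in [CL2009]''), so there is no internal proof to compare against. Your proposal is a valid self-contained substitute. Both inequalities are sound: the lower bound $\qa f\geq\ess f|_{\Aneq}$ follows exactly as you say, since a witness pair in $(\Aneq)^2$ for $f|_{\Aneq}$ is a witness for any support $g$; and for the upper bound your candidate support $g(x_1,\dots,x_n)=f(x_1,\dots,x_m,x_1,\dots,x_1)$ (respectively a constant when $m=0$) is verified to agree with $f$ on $\Aneq$ by a legitimate one-coordinate-at-a-time path argument, each step staying inside $\Aneq$ and changing only a variable that is inessential in $f|_{\Aneq}$. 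You also correctly isolate where $n\neq 2$ enters: for $m=0$ the connectivity of $\Aneq$ under single-coordinate moves needs $n\geq 3$ (for $n=2$ the diagonal is not connected in this sense, and indeed the lemma fails there, since $\ess f|_{\Aneq[2]}$ is always $0$ while $\qa f$ can be $1$). Compared with simply citing~\cite{CL2009}, your route has the merit of making the statement self-contained and of exhibiting explicitly the minimizing support (a variable-identification minor of $f$), which is in the same spirit as the diagonal-witness manipulations the paper does carry out in Lemma~\ref{lem:diagwitness} and Proposition~\ref{prop:uniqueSupport}.
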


\begin{lemma}\label{lem:m-ary}
If any variable of a quasi-$m$-ary function $f \colon A^n \to B$ is inessential, then $f$ is essentially $m$-ary.
\end{lemma}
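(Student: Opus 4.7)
The plan is to establish both inequalities $\ess f \geq m$ and $\ess f \leq m$. The first is immediate from the definitions: $f$ is itself a support of $f$, so it enters the minimum defining $\qa f$, giving $\ess f \geq \qa f = m$.

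The substantive content is the reverse inequality, where the hypothesis that $f$ has some inessential variable is needed. I would handle the main case $n \geq 3$ using the two preceding lemmas in tandem. Lemma \ref{lem:qaess} rewrites the quasi-arity as $m = \qa f = \ess f|_{\Aneq}$. Given any essential variable $x_i$ of $f$, the hypothesis ensures $\ess f < n$, so Lemma \ref{lem:diagwitness} supplies a witness of essentiality of $x_i$ that lies in $(\Aneq)^2$. Since $f$ and $f|_{\Aneq}$ agree on $\Aneq$, this pair is also a witness of essentiality of $x_i$ in the partial function $f|_{\Aneq}$, and hence every essential variable of $f$ is essential in $f|_{\Aneq}$. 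This yields $\ess f \leq \ess f|_{\Aneq} = m$, as required.

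The remaining cases $n \leq 2$ need a short separate treatment, since Lemma \ref{lem:qaess} excludes $n = 2$ and Lemma \ref{lem:diagwitness} requires $n \geq 3$. For $n = 1$ the only support of $f$ is $f$ itself, so $\qa f = \ess f$ directly. For $n = 2$, the inessential variable (say $x_2$) forces $f(x_1, x_2) = h(x_1)$, and a brief inspection of whether $h$ is constant shows $\ess f = \qa f$ in both subcases. I expect the main conceptual point to be the observation that Lemma \ref{lem:diagwitness} is exactly the tool that makes the reverse inequality go through, by forcing essentiality witnesses onto the diagonal where the quasi-arity is being measured; once that connection is in place, the rest is bookkeeping.
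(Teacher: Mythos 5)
Your argument is correct. Note that the paper does not actually prove this lemma --- it is quoted from \cite{CL2009} --- so there is no in-paper proof to compare against; but your route is sound and uses only the paper's own toolkit. The inequality $\ess f \geq \qa f$ is indeed immediate since $f$ is a support of itself, and for $n \geq 3$ the hypothesis of an inessential variable gives exactly the condition $\ess f < n$ needed to invoke Lemma~\ref{lem:diagwitness}, which pushes a witness of essentiality of each essential variable of $f$ into $(\Aneq)^2$ and hence makes that variable essential in $f|_{\Aneq}$; combined with Lemma~\ref{lem:qaess} this yields $\ess f \leq \ess f|_{\Aneq} = \qa f = m$. Your separate treatment of the small arities is also right: for $n=1$ one has $\Aneq[1] = A$, so the only support is $f$ itself, and for $n=2$ (where Lemma~\ref{lem:qaess} is not available) writing $f(x_1,x_2)=h(x_1)$ and splitting on whether $h$ is constant gives $\ess f = \qa f \in \{0,1\}$ in both subcases. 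Nothing is missing.
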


\begin{remark}
\label{rem:an}
If $A$ is a finite set and $n > \card{A}$, then $\Aneq = A^n$, and hence for every $f \colon A^n \to B$ we have $\qa f = \ess f$.
\end{remark}

In the sequel, we will make use of the following result.

\begin{proposition}
\label{prop:uniqueSupport}
Let $f \colon A^n \to B$, $n \geq 3$. If $\ess f = n > m = \qa f$, then $f$ has a unique essentially $m$-ary support.
\end{proposition}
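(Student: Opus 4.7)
The plan is to take two essentially $m$-ary supports $g_1, g_2$ of $f$, let $I_1, I_2 \subseteq \{1,\ldots,n\}$ (each of size $m$) be their respective sets of essential variables, and show first that $I_1 = I_2$, and then that $g_1 = g_2$ everywhere on $A^n$.

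For the first step, I would argue as follows. Although Lemma \ref{lem:diagwitness} cannot be applied to $f$ itself (since $\ess f = n$), it does apply to each $g_i$, because $\ess g_i = m < n$ and $n \geq 3$. So for any $k \in I_1$, Lemma \ref{lem:diagwitness} furnishes a witness of essentiality $(\vect{a}, \vect{b}) \in (\Aneq)^2$ of $x_k$ in $g_1$. Since $\vect{a}, \vect{b} \in \Aneq$, both $g_1$ and $g_2$ coincide with $f$ there, so $g_2(\vect{a}) = g_1(\vect{a}) \neq g_1(\vect{b}) = g_2(\vect{b})$; as $\vect{a}$ and $\vect{b}$ differ only in the $k$-th coordinate, this forces $k \in I_2$. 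Exchanging the roles of $g_1$ and $g_2$ gives $I_1 = I_2 =: I$.

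For the second step, set $J = \{1, \ldots, n\} \setminus I$; since $m < n$, $J$ is nonempty. Given an arbitrary $\vect{a} = (a_1, \ldots, a_n) \in A^n$, pick $j \in J$ and any $k \neq j$, and let $\vect{a}'$ be obtained from $\vect{a}$ by replacing $a_j$ with $a_k$. Then $\vect{a}' \in \Aneq$, and $\vect{a}'$ agrees with $\vect{a}$ in every coordinate belonging to $I$. Because $g_1$ and $g_2$ depend only on the coordinates in $I$, it follows that $g_1(\vect{a}) = g_1(\vect{a}') = f(\vect{a}') = g_2(\vect{a}') = g_2(\vect{a})$, completing the proof.

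The key insight (and only real obstacle) is to recognize that Lemma \ref{lem:diagwitness} should be invoked on the supports $g_i$ rather than on $f$; once this is noticed, the identification of $I_1$ with $I_2$ is immediate, and the remaining uniqueness argument reduces to the simple observation that an essentially $m$-ary function with fixed essential variable set $I$ is determined by its restriction to $\Aneq$ whenever $n > m$.
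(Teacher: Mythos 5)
Your proof is correct and follows essentially the same route as the paper's: both apply Lemma~\ref{lem:diagwitness} to the essentially $m$-ary supports (where $\ess g = m < n$ makes it applicable) to conclude that all such supports have the same essential variables, and then use agreement on $\Aneq$ together with the inessential coordinates to propagate equality to all of $A^n$. The only cosmetic difference is that the paper identifies all trailing inessential variables with $x_m$ at once, while you replace a single inessential coordinate to land in $\Aneq$; the substance is identical.
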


\begin{proof}
Let $g \colon A^n \to B$ be an essentially $m$-ary support of $f$, say, with $x_1, \ldots, x_m$ essential. By Lemma~\ref{lem:diagwitness}, $g$ and $f|_{\Aneq}$ have the same essential variables. Now if $h \colon A^n \to B$ is an essentially $m$-ary support of $f$, then $x_1, \ldots, x_m$ are exactly the essential variables of $h$, and
\begin{align*}
h(x_1, \ldots, x_n)
& = h(x_1, \ldots, x_m, x_m, \ldots, x_m)
  = f(x_1, \ldots, x_m, x_m, \ldots, x_m) \\
& = g(x_1, \ldots, x_m, x_m, \ldots, x_m)
  = g(x_1, \ldots, x_n).
\end{align*}
Thus $h$ and $g$ coincide.
\end{proof}

%%%%%%%%%%%%%%%%%%%%%
%%%%% Arity gap %%%%%
%%%%%%%%%%%%%%%%%%%%%

\section{Arity gap}
\label{sec:gap}

Recall that simple variable substitution induces a quasi-order on the set of $B$-valued functions on $A$, as described in Section \ref{sec:minors}. For a function $f \colon A^n \to B$ with at least two essential variables, we denote
\[
\essl f = \max_{g < f} \ess g,
\]
and we define the \emph{arity gap} of $f$ by $\gap f = \ess f - \essl f$.

In the sequel, whenever we consider the arity gap of some function $f$, we will assume that all variables of $f$ are essential. This is not a significant restriction, because every non-constant function is equivalent to a function with no inessential variables and equivalent functions have the same arity gap.

Salomaa~\cite{Salomaa} proved that the arity gap of every Boolean function with at least two essential variables is at most $2$. This result was generalized by Willard \cite[Lemma 1.2]{Willard} in the following theorem.

\begin{theorem}
\label{Willard1.2}
Let $A$ be a finite set. Suppose $f \colon A^n \to B$ depends on all of its variables. If $n > \card{A}$, then $\gap f \leq 2$.
\end{theorem}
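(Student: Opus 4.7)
The plan is to exhibit a single identification minor $g$ of $f$ satisfying $\ess g \ge n - 2$; since any such $g$ is a strict minor (as $\ess g \le n-1 < n = \ess f$), this immediately yields $\essl f \ge n-2$ and hence $\gap f \le 2$. Focusing on single identifications loses no generality, since iterated simple substitutions can only decrease essential arity further. The hypothesis $n > \card{A}$ enters via pigeonhole: every tuple of $A^n$ has two equal coordinates, so $A^n = \bigcup_{k \ne l} D_{kl}$, where $D_{kl} := \{\vect{a} \in A^n : a_k = a_l\}$. In particular $\Aneq = A^n$, so by Lemma~\ref{lem:qaess} we have $\qa f = \ess f = n$, and witnesses of essentiality lie automatically in $(\Aneq)^2$.

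For a pair $k \ne l$, consider the minor $g_{kl}(x_1, \dots, x_n) := f(x_1, \dots, x_{l-1}, x_k, x_{l+1}, \dots, x_n)$, in which $x_l$ is trivially inessential. For an index $i \notin \{k, l\}$, a standard unwinding shows that $x_i$ is essential in $g_{kl}$ precisely when some witness of essentiality of $x_i$ in $f$ lies in $D_{kl}$ (the two entries of such a witness agree on every position other than $i$, hence also on positions $k$ and $l$). The goal thus reduces to producing a single pair $(k, l)$ for which, for all but at most one $i \in \{1, \dots, n\} \setminus \{l\}$, there is a witness of $x_i$ lying in $D_{kl}$.

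To execute this I would fix any essential variable, say $x_1$, choose a witness $(\vect{a}, \vect{b})$ of its essentiality, and apply pigeonhole to the $n-1 \ge \card{A}$ coordinates of $\vect{a}$ outside position $1$ to extract indices $k \ne l$, both different from $1$, with $a_k = a_l$; this already places $x_1$ among the essential variables of $g_{kl}$. The heart of the argument is then a combinatorial step showing that at least $n-3$ of the other variables $x_2, \dots, x_n$ also admit witnesses in $D_{kl}$; I expect this to follow from a counting argument over the many diagonals available, combined with the fact that $f = f|_{\Aneq}$ forces witnesses of each essential variable to distribute densely across the $D_{kl}$'s. The main obstacle is the boundary case $n = \card{A}+1$, in which the pigeonhole margin is tight and a variable $x_i$ might only admit witnesses whose $n-1$ coordinates outside position $i$ are all distinct, so that the sole forced repetition in such a tuple involves position $i$ itself. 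Resolving this delicate case, and synchronising the choices of witnesses so that a single pair $(k, l)$ is compatible with nearly all variables simultaneously, is the subtlest point of the argument and likely calls for an auxiliary structural claim about $f$ restricted to the relevant diagonals.
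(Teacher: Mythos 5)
Your setup is sound -- the reduction to single-identification minors, the observation that $n > \card{A}$ forces $\Aneq = A^n$, and the characterization that for $i \notin \{k,l\}$ the variable $x_i$ is essential in $g_{kl}$ exactly when some witness of $x_i$ in $f$ lies in $D_{kl}$ -- but the proof stops precisely where the theorem's actual content begins. The claim that some single pair $(k,l)$ can be chosen so that enough variables simultaneously admit witnesses in $D_{kl}$ is not established: you say you ``expect'' it to follow from a counting argument and that the boundary case $n = \card{A}+1$ ``likely calls for an auxiliary structural claim,'' but no such count or claim is given. That synchronization step is essentially all of Willard's Lemma~1.2 (which this paper does not reprove but cites); Willard's argument is a genuinely delicate case analysis, not a routine density count over the diagonals, and nothing in your sketch indicates how to carry it out -- in particular in the tight case $n = \card{A}+1$, where a witness of $x_i$ may have all coordinates outside position $i$ pairwise distinct and hence lie in no $D_{kl}$ with $k,l \neq i$.

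There is also a smaller miscalibration in the stated target. You ask for witnesses of $x_i$ in $D_{kl}$ for all but at most one $i \in \{1,\dots,n\}\setminus\{l\}$, which includes $i = k$; but no witness of $x_k$ in $f$ can lie in $D_{kl}$, since two points of $D_{kl}$ differing only in coordinate $k$ satisfy $a_k = a_l = b_l = b_k$ and hence coincide. So, as stated, your condition silently requires \emph{all} $n-2$ indices $i \notin \{k,l\}$ to have witnesses in $D_{kl}$, which is stronger than what $\ess g_{kl} \geq n-2$ needs; to use the slack you would have to analyse essentiality of the merged variable $x_k$ in $g_{kl}$ separately (witnesses there correspond to pairs of points of $D_{kl}$ differing in both coordinates $k$ and $l$), and that analysis is absent. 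In short: correct framing, but the combinatorial core of the theorem is missing.
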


In \cite{CL2007}, Salomaa's result was strengthened by completely classifying all Boolean functions in terms of arity gap: for $f \colon \{0, 1\}^n \to \{0, 1\}$, $\gap f = 2$ if and only if $f$ is equivalent to one of the following Boolean functions:
\begin{compactitem}
\item $x_1 + x_2 + \dots + x_m + c$,
\item $x_1 x_2 + x_1 + c$,
\item $x_1 x_2 + x_1 x_3 + x_2 x_3 + c$,
\item $x_1 x_2 + x_1 x_3 + x_2 x_3 + x_1 + x_2 + c$,
\end{compactitem}
where addition and multiplication are done modulo 2 and $c \in \{0, 1\}$. Otherwise $\gap f = 1$.

Based on this, a complete classification of pseudo-Boolean functions according to their arity gap was presented in~\cite{CL2009}.

\begin{theorem}
\label{gappseudoB}
For a pseudo-Boolean function $f \colon \{0,1\}^n \to B$ which depends on all of its variables, $\gap f = 2$ if and only if $f$ satisfies one of the following conditions:
\begin{compactitem}
\item $n = 2$ and $f$ is a nonconstant function satisfying $f(0,0) = f(1,1)$,
\item $f = g \circ h$, where $g \colon \{0, 1\} \to B$ is injective and $h \colon \{0, 1\}^n \to \{0, 1\}$ is a Boolean function with $\gap h = 2$, as listed above.
\end{compactitem}
Otherwise $\gap f = 1$.
\end{theorem}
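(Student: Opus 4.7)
The plan is to split the analysis into cases $n = 2$ and $n \geq 3$. In both directions one computes $\essl f$ from the allowed simple minors.

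For $n = 2$, the only proper simple minor of $f$ that is not already a constant function is the unary diagonal $d(x) = f(x, x)$; hence $\essl f = \ess d$. Thus $\gap f = 2$ if and only if $d$ is constant, that is, $f(0,0) = f(1,1)$. Nonconstancy of $f$ is forced by $\ess f = 2$ in the forward direction, and in the reverse direction it is an explicit hypothesis from which a direct check shows that both variables are essential. This yields the first bullet.

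For $n \geq 3$, the reverse direction is straightforward: if $f = g \circ h$ with $g \colon \{0,1\} \to B$ injective, then for every simple substitution $\sigma$ one has $f_\sigma = g \circ h_\sigma$, and injectivity of $g$ implies that $f_\sigma$ and $h_\sigma$ have exactly the same essential variables. Hence $\ess f = \ess h$ and $\essl f = \essl h$, so $\gap f = \gap h = 2$. The forward direction reduces to showing that $|\range f| = 2$: granted this, choose distinct $b_0, b_1 \in \range f$, let $h$ be the indicator sending $x$ to $0$ when $f(x) = b_0$ and to $1$ otherwise, and let $g(i) = b_i$, so that $f = g \circ h$ with $g$ injective. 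The reverse direction then forces $\gap h = 2$, placing $h$ among the four forms of the Boolean classification from \cite{CL2007}.

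The main obstacle is establishing $|\range f| = 2$. Since $n > |A| = 2$ and $\gap f = 2$, Willard's refinement of Theorem~\ref{Willard1.2} (mentioned in the introduction) ensures that $f$ is totally symmetric; hence $f = F \circ w$ where $w(x_1, \ldots, x_n) = x_1 + \cdots + x_n$ and $F \colon \{0, \ldots, n\} \to B$. Because $\essl f = n - 2$, the minor $h(x_1, x_3, \ldots, x_n) = f(x_1, x_1, x_3, \ldots, x_n) = F(2 x_1 + x_3 + \cdots + x_n)$ satisfies $\ess h \leq n - 2$. The partial symmetry of $h$ in $x_3, \ldots, x_n$ makes these $n - 2$ variables either all essential or all inessential in $h$; the all-inessential case, combined with $\ess f = n$, forces $F$ to be constant, contradicting $\ess f \geq 1$. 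Therefore the inessential variable of $h$ must be $x_1$, which translates into $F(s) = F(s+2)$ for every $s \in \{0, \ldots, n-2\}$, so $F$ takes at most two distinct values. Nonconstancy of $f$ then gives $|\range f| = 2$.
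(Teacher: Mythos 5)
Your argument for $n=2$ and your reduction ``$\gap f = 2$ and $n\geq 3$ $\Rightarrow$ $\card{\range f}=2$ $\Rightarrow$ $f = g\circ h$ with $g$ injective and $h$ Boolean of gap $2$'' are fine in outline, and for $n\geq 4$ the symmetry argument you give does work. (The paper itself does not reprove this theorem -- it is quoted from \cite{CL2009} -- so your proof cannot be compared step-by-step with one in the text.) However, there is a genuine gap at $n=3$. The total-symmetry consequence of Willard's theorem that you invoke is only valid for $n > \max(\card{A},3)$ (this is exactly how the paper states it in the remark on $\oddsupp$; the sentence in the introduction is a loose paraphrase), and it is genuinely false for $n=3$, $A=\{0,1\}$: the function $x_1x_2+x_1x_3+x_2x_3+x_1+x_2$ (mod $2$) -- one of the four gap-$2$ Boolean functions listed in the very classification you are using -- depends on all three of its variables, has arity gap $2$ (identifying any two variables leaves an essentially unary function), but is not totally symmetric, since it takes the value $1$ at $(1,0,0)$ and $0$ at $(0,0,1)$. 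So for $n=3$ your key step $\card{\range f}=2$ is not established by the symmetric-function computation $F(s)=F(s+2)$, because there is no such $F$.

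The conclusion $\card{\range f}=2$ is still true for $n=3$, but it needs a separate argument, e.g.\ a direct analysis of the three binary identification minors $f(x,x,y)$, $f(x,y,x)$, $f(x,y,y)$, each of which is essentially at most unary when $\gap f=2$, together with the fact that $\{0,1\}^3 = \Aneq[3]$, so that the values of $f$ are exhausted by these minors; alternatively one can follow the route of \cite{CL2009} via the tools of Berman--Kisielewicz and Willard. Also note a small independent point: in the all-inessential subcase of your symmetry argument the overlap of the two arithmetic progressions of values of $F$ requires $n-2\geq 2$, i.e.\ again $n\geq 4$, so that subcase too silently excludes $n=3$. In short, the proof is acceptable for $n=2$ and $n\geq 4$ but the case $n=3$ must be treated separately, and as written it rests on a false premise there.
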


The study of the arity gap of functions $A^n \to B$ culminated into the characterization presented in Theorem~\ref{thm:gap}, originally proved in~\cite{CL2009}. We need to introduce some terminology to state the result. Denote by $\mathcal{P}(A)$ the power set of $A$, and define the function $\oddsupp \colon \bigcup_{n \geq 1} A^n \to \mathcal{P}(A)$ by
\[
\oddsupp(a_1, \dots, a_n) =
\{a_i : \text{$\card{\{j \in \{1, \dots, n\} : a_j = a_i\}}$ is odd}\}.
\]
We say that a partial function $f \colon S \to B$, $S \subseteq A^n$, is \emph{determined by $\oddsupp$} if $f = f^* \circ \, \oddsupp|_S$ for some function $f^* \colon \mathcal{P}(A) \to B$. In order to avoid cumbersome notation, if $f \colon S \to B$, $S \subseteq A^n$, is determined by $\oddsupp$, then whenever we refer to the decomposition $f = f^* \circ \oddsupp|_S$, we may write simply ``$\oddsupp$'' in place of ``$\oddsupp|_S$'', omitting the subscript indicating the domain restriction as it will be obvious from the context.

\begin{remark}
The notion of a function being determined by $\oddsupp$ is due to Berman and Kisielewicz~\cite{BK}. Willard~\cite{Willard} showed that if $f \colon A^n \to B$ where $A$ is finite, $n > \max(\lvert A \rvert, 3)$ and $\gap f = 2$, then $f$ is determined by $\oddsupp$.
\end{remark}

\begin{remark}
\label{rem:oddsupp}
It is easy to verify that for $n \geq 2$,
\[
\range \oddsupp|_{\Aneq} = \{S \subseteq A : \card{S} \equiv n \!\!\!\!\pmod{2},\, \card{S} \leq n - 2\}.
\]
Thus, if $f \colon \Aneq \to B$ is determined by oddsupp, i.e., $f = f^* \circ \oddsupp|_{\Aneq}$, then within the domain $\mathcal{P}(A)$ of $f^*$, only the subsets of $A$ of order at most $n - 2$ with the same parity as $n$ (odd or even) are relevant.
\end{remark}

\begin{remark}
\label{rem:oddsupp2}
A function $f \colon A^n \to A$ is determined by $\oddsupp$ if and only if $f|_{\Aneq}$ is determined by $\oddsupp$ and $f$ is totally symmetric.
\end{remark}

\begin{theorem}
\label{thm:gap}
Let $A$ and $B$ be arbirary sets with at least two elements.
Suppose that $f \colon A^n \to B$, $n \geq 2$, depends on all of its variables.
\begin{compactenum}[\indent\rm (i)]
\item \label{thmitem1} For $3 \leq p \leq n$, $\gap f = p$ if and only if $\qa f = n - p$.
\item \label{thmitem2} For $n \neq 3$, $\gap f = 2$ if and only if $\qa f = n - 2$ or $\qa f = n$ and $f|_{\Aneq}$ is determined by $\oddsupp$.
\item \label{thmitem3} For $n = 3$, $\gap f = 2$ if and only if there is a nonconstant unary function $h \colon A \to B$ and $i_1, i_2, i_3 \in \{0,1\}$ such that
\begin{align*}
f(x_1, x_0, x_0) &= h(x_{i_1}), \\
f(x_0, x_1, x_0) &= h(x_{i_2}), \\
f(x_0, x_0, x_1) &= h(x_{i_3}).
\end{align*}
\item Otherwise $\gap f = 1$.
\end{compactenum}
\end{theorem}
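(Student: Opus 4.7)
The plan is to analyze $\essl f$ by case on $\qa f$, relating it to $\qa f$ when $\qa f<n$ and to the $\oddsupp$-structure of $f|_{\Aneq}$ when $\qa f=n$.

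The central technical step is: for $\ess f=n$ and $\qa f=m<n$, one has $\essl f=m$, hence $\gap f=n-m$. For the upper bound $\essl f\leq m$, any proper minor $g<f$ must arise from an identification of variables (since $f$ has no inessential variable to delete), so its values lie in $f|_{\Aneq}$, and $g$ is therefore a simple minor of the unique essentially $m$-ary support $h$ of $f$ provided by Proposition~\ref{prop:uniqueSupport}; hence $\ess g\leq m$. For the lower bound $\essl f\geq m$, taking the essential variables of $h$ to be $x_1,\ldots,x_m$, the substitution $x_{m+1}=\cdots=x_n=x_m$ in $f$ produces a simple minor coinciding (via $f=h$ on $\Aneq$) with $(x_1,\ldots,x_m)\mapsto h(x_1,\ldots,x_m,x_m,\ldots,x_m)$, which retains all $m$ essentials. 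Applied with $m\leq n-3$, $m=n-2$, and $m=n-1$ in turn, this delivers part~(i) (for $p\geq 3$), the first disjunct of~(ii), and the corresponding portion of the final ``$\gap f=1$'' clause.

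The remaining case is $\qa f=n$ (for which Theorem~\ref{Willard1.2} and a finite-witness reduction yield $\gap f\leq 2$). For $n\geq 4$, I prove $\gap f=2\Leftrightarrow f|_{\Aneq}$ is determined by $\oddsupp$. The ``if'' direction exploits the group view $\oddsupp(a_1,\ldots,a_n)=\sum_i\chi_{a_i}$ in $(\mathbb{Z}/2)^A$: identifying $x_i=x_j$ duplicates the corresponding character, which cancels modulo~$2$, so the resulting minor is independent of the identified variable and has essential arity at most $n-2$; combined with $\gap f\leq 2$ this forces $\gap f=2$. The ``only if'' direction is the main obstacle, equivalent to the Willard/Berman--Kisielewicz rigidity phenomenon (\cite{BK}); for arbitrary $A$, any putative failure of $\oddsupp$-determinedness of $f|_{\Aneq}$ is witnessed on a finite subset of $A$, reducing to the finite-domain instance handled by Theorem~\ref{Willard1.2}. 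The case $n=2$ is vacuous, since $\qa f=2$ cannot occur (the diagonal is a unary function).

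For $n=3$ (part~(iii)) the $\oddsupp$-characterization degenerates ($\oddsupp|_{\Aneq[3]}$ is singleton-valued), so I argue directly. Each of the three identification minors $f(x_1,x_0,x_0)$, $f(x_0,x_1,x_0)$, $f(x_0,x_0,x_1)$ must be essentially at most unary (since $\essl f=1$), hence of the form $h_k(x_{i_k})$ with $i_k\in\{0,1\}$; all three agree with $f(x_0,x_0,x_0)$ on the diagonal $x_1=x_0$, forcing $h_1=h_2=h_3=:h$, and $h$ is nonconstant (otherwise $\essl f=0$ and $\gap f=3$, contradicting part~(i), which would then demand $\qa f=0$). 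The converse is immediate: the stated form yields $\essl f\leq 1$, and nonconstancy of $h$ yields $\essl f\geq 1$, so $\gap f=2$. The residual ``otherwise $\gap f=1$'' clause then follows by exhaustion over all preceding cases.
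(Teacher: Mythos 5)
Your treatment of the case $\qa f<n$ is correct and is essentially the standard argument: every proper minor of $f$ factors through $f|_{\Aneq}$, hence through an essentially $(\qa f)$-ary support, giving $\essl f=\qa f$ and $\gap f=n-\qa f$; your direct handling of $n=3$ and the mod-$2$ cancellation argument for the ``if'' part of (ii) are also fine (for the latter you do not even need the a priori bound $\gap f\leq 2$: the symmetric minor obtained by identifying two variables equals $f^{\ast}\circ\oddsupp$ in $n-2$ variables, which is essentially $(n-2)$-ary unless $f|_{\Aneq}$ is constant, so $\essl f=n-2$ follows directly).

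The genuine gap is the entire case $\qa f=n$, which is the heart of the theorem, and your two appeals to ``Theorem~\ref{Willard1.2} and a finite-witness reduction'' do not carry it. First, Theorem~\ref{Willard1.2} only asserts $\gap f\leq 2$ under the hypothesis $n>\card{A}$, and it says nothing about $\oddsupp$; the $\oddsupp$ statement of Willard quoted in the paper likewise needs $n>\max(\card{A},3)$ and is not proved here. For $4\leq n\leq\card{A}$ (in particular for infinite $A$) you therefore still owe proofs of both claims: (a) $\qa f=n$ implies $\gap f\leq 2$ (needed already for the converse of part (i)), and (b) $\gap f=2$ together with $\qa f=n$ forces $f|_{\Aneq}$ to be determined by $\oddsupp$. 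Second, the proposed reduction cannot produce the hypothesis $n>\card{C}$: to keep all $n$ variables essential in $f|_{\Aneq}$ (equivalently $\qa f=n$, Lemma~\ref{lem:qaess}) after restricting to a finite $C\subseteq A$ you must retain witnesses of essentiality, which may involve far more than $n$ distinct elements, and restriction does not preserve $\ess$, $\qa$ or $\gap$ in any controlled way; enlarging $C$ to capture witnesses only moves you further from $n>\card{C}$. What is actually needed here is the Świerczkowski-type combinatorial analysis of the identification minors, which is precisely how this theorem is proved in \cite{CL2009} (the present paper does not reprove it; it cites that proof and observes in Remark~\ref{rem:generalization} that it never uses finiteness of $A$). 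As it stands, your argument reproves the easy parts and postulates the hard one.
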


\begin{remark}
\label{rem:generalization}
While Theorem~\ref{thm:gap} was originally presented in the setting of functions $f \colon A^n \to B$ where $A$ is a finite set, its proof does not make use of any assumption on the cardinality of $A$ -- except for $A$ having at least two elements -- so it immediately generalizes to functions with arbitrary domains. 
\end{remark}

\section{A decomposition theorem for functions}
\label{sec:decomposition}

In this section, we will establish the following classification of functions $f \colon A^n \to B$ ($n \geq 3$) with arity gap $p \geq 3$, which also provides a decomposition of such functions into a sum of a quasi-nullary function and an essentially $(n - p)$-ary function.

\begin{theorem}
\label{thm:sum}
Assume that $(B; +)$ is a group with neutral element $0$. Let $f \colon A^n \to B$, $n \geq 3$, and $3 \leq p \leq n$. Then the following two conditions are equivalent:
\begin{enumerate}[\indent\rm (1)]
\item \label{itemthmsum1}
$\ess f = n$ and $\gap f = p$.
\item \label{itemthmsum2}
There exist functions $g, h \colon A^n \to B$ such that $f = h + g$, $h|_{\Aneq} \equiv 0$, $h \not\equiv 0$, and $\ess g = n - p$.
\end{enumerate}
The decomposition $f = h + g$ given above is unique.
\end{theorem}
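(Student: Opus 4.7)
The plan is to reduce everything to the quasi-arity characterization in Theorem~\ref{thm:gap}(\ref{thmitem1}): for $3 \leq p \leq n$, a function with all variables essential has $\gap f = p$ if and only if $\qa f = n-p$. Once condition~(\ref{itemthmsum1}) is read this way, the group structure on $B$ allows passage between supports of $f$ (functions agreeing with $f$ on $\Aneq$) and functions vanishing on $\Aneq$, via $h = f - g$.

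For (\ref{itemthmsum1})$\Rightarrow$(\ref{itemthmsum2}), since $\ess f = n > n-p = \qa f$ and $n \geq 3$, Proposition~\ref{prop:uniqueSupport} provides a unique essentially $(n-p)$-ary support $g$ of $f$. Setting $h := f - g$, the identity $h|_{\Aneq} \equiv 0$ is immediate from $g$ being a support of $f$, and $h \not\equiv 0$ because otherwise $\ess f = \ess g = n - p < n$.

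For (\ref{itemthmsum2})$\Rightarrow$(\ref{itemthmsum1}), assume $f = h + g$ as stated. From $h|_{\Aneq} \equiv 0$ we get $f|_{\Aneq} = g|_{\Aneq}$; combining Lemma~\ref{lem:qaess} (available since $n \geq 3$) with Lemma~\ref{lem:diagwitness} applied to $g$ (which has an inessential variable because $\ess g = n-p < n$) yields $\qa f = \ess f|_{\Aneq} = \ess g|_{\Aneq} = \ess g = n-p$. The delicate step is showing $\ess f = n$. I would argue by contradiction: if $f$ had an inessential variable, Lemma~\ref{lem:m-ary} would give $\ess f = n - p$, and then both $f$ and $g$ would be determined by their restrictions to $\Aneq$ (any tuple can be rewritten, by replacing an inessential coordinate with the value of another coordinate, as a tuple in $\Aneq$ without changing the function value; the common set of essential variables of $f$ and $g$ is matched via Lemma~\ref{lem:diagwitness}). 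Since $f|_{\Aneq} = g|_{\Aneq}$, this would force $f = g$, contradicting $h \not\equiv 0$. With $\ess f = n$ and $\qa f = n-p$, Theorem~\ref{thm:gap}(\ref{thmitem1}) delivers $\gap f = p$.

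Uniqueness follows at once from Proposition~\ref{prop:uniqueSupport}: in any decomposition $f = h + g$ satisfying (\ref{itemthmsum2}), $g$ is an essentially $(n-p)$-ary support of $f$, and such a support is unique under (\ref{itemthmsum1}); hence $g$, and therefore $h = f - g$, is determined by $f$. The main obstacle is the verification of $\ess f = n$ in the reverse direction, where one must exploit the nontriviality of $h$ off $\Aneq$ to exclude the degenerate case in which the $(n-p)$-ary nature of $g$ would be \emph{inherited} by $f$.
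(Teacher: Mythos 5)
Your proposal is correct and follows essentially the same route as the paper: the paper factors the argument through Lemma~\ref{lemma:sum} (the quasi-arity version), then applies Theorem~\ref{thm:gap}~\eqref{thmitem1}, using exactly the same ingredients you use -- Lemmas~\ref{lem:diagwitness}, \ref{lem:qaess}, \ref{lem:m-ary} and Proposition~\ref{prop:uniqueSupport} -- including the contradiction argument for $\ess f = n$ and the uniqueness via the unique essentially $(n-p)$-ary support. Your inline rewriting argument for forcing $f = g$ is just a re-derivation of Proposition~\ref{prop:uniqueSupport}, so the difference is purely organizational.
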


We will prove Theorem~\ref{thm:sum} using the following lemma.

\begin{lemma}
\label{lemma:sum}
Assume that $(B; +)$ is a group with neutral element $0$. Let $f \colon A^n \to B$, $n \geq 3$, and $1 \leq p \leq n$. Then the following two conditions are equivalent:
\begin{enumerate}[\indent\rm (a)]
\item \label{itemlemmasum1}
$\ess f = n$ and $\qa f = n - p$.
\item \label{itemlemmasum2}
There exist functions $g, h \colon A^n \to B$ such that $f = h + g$, $h|_{\Aneq} \equiv 0$, $h \not\equiv 0$, and $\ess g = n - p$.
\end{enumerate}
The decomposition $f = h + g$ given above is unique.
\end{lemma}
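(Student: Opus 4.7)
The strategy will be to pass back and forth between the decomposition $f = h + g$ and the essentially $(n-p)$-ary support $g$ of $f$, leveraging Proposition~\ref{prop:uniqueSupport} and the groundwork laid in Section~\ref{sec:minors}.

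For the implication \eqref{itemlemmasum1}$\Rightarrow$\eqref{itemlemmasum2}, since $\ess f = n > n - p = \qa f$, Proposition~\ref{prop:uniqueSupport} supplies a unique essentially $(n-p)$-ary support $g$ of $f$ (the degenerate case $n - p = 0$ is immediate from $\Aneq \neq \emptyset$). I would set $h := f - g$, observe from the support property that $h|_{\Aneq} \equiv 0$, and note that $h \not\equiv 0$ because otherwise $f = g$ would force $\ess f = n - p < n$, contradicting $\ess f = n$.

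For \eqref{itemlemmasum2}$\Rightarrow$\eqref{itemlemmasum1}, the condition $h|_{\Aneq} \equiv 0$ immediately yields $f|_{\Aneq} = g|_{\Aneq}$, so $g$ is a support of $f$ and $\qa f \leq n - p$. Because $p \geq 1$ forces $g$ to have inessential variables, Lemma~\ref{lem:m-ary} gives $\qa g = \ess g = n - p$, and Lemma~\ref{lem:qaess} applied first to $g$ and then to $f$ promotes this to $\qa f = \ess f|_{\Aneq} = \ess g|_{\Aneq} = n - p$. The crux is to show $\ess f = n$. Assume otherwise: then $f$ has an inessential variable, so by Lemma~\ref{lem:m-ary} applied to $f$ one gets $\ess f = \qa f = n - p$, and Lemma~\ref{lem:diagwitness} (applicable since $\ess f, \ess g < n$) shows that $f$ and $g$ share exactly the same essential variables as $f|_{\Aneq}$, which may be relabelled as $x_1, \ldots, x_{n-p}$. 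When $1 \leq p < n$, for every $(a_1, \ldots, a_n) \in A^n$ the tuple $(a_1, \ldots, a_{n-p}, a_1, \ldots, a_1)$ lies in $\Aneq$ (positions $1$ and $n-p+1$ coincide), and substituting away the inessential coordinates of both $f$ and $g$ through this $\Aneq$-tuple gives
\[
f(a_1, \ldots, a_n) = f(a_1, \ldots, a_{n-p}, a_1, \ldots, a_1) = g(a_1, \ldots, a_{n-p}, a_1, \ldots, a_1) = g(a_1, \ldots, a_n);
\]
in the boundary case $p = n$, both $f$ and $g$ are constants that coincide on the nonempty set $\Aneq$. Either way $f = g$, whence $h \equiv 0$, contradicting \eqref{itemlemmasum2}.

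Uniqueness of the decomposition falls out at the end: if $f = h + g = h' + g'$ with both decompositions satisfying \eqref{itemlemmasum2}, then $g$ and $g'$ are two essentially $(n-p)$-ary supports of $f$, and since $\ess f = n$ has just been established, Proposition~\ref{prop:uniqueSupport} forces $g = g'$, hence $h = h'$. The main obstacle in this plan is the step ``$\ess f = n$'' in \eqref{itemlemmasum2}$\Rightarrow$\eqref{itemlemmasum1}; the substitution trick funnelling each evaluation into a tuple of $\Aneq$ through a fixed essential coordinate, together with the careful separate handling of $p = n$, is what rules out the scenario in which $f$ has a hidden inessential variable and therefore agrees globally with its support.
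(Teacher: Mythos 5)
Your proposal is correct and follows essentially the same route as the paper: the same decomposition $h := f - g$ for (a)$\Rightarrow$(b), the same chain through Lemmas~\ref{lem:qaess}, \ref{lem:m-ary} (and the content of Lemma~\ref{lem:diagwitness}) for (b)$\Rightarrow$(a), and uniqueness via Proposition~\ref{prop:uniqueSupport}. The only difference is that in the contradiction step you inline the variable-substitution argument from the proof of Proposition~\ref{prop:uniqueSupport} (with the $p=n$ case treated separately) instead of citing the proposition itself -- which is in fact slightly more careful, since in that scenario $\ess f = \qa f$, so the proposition's literal hypothesis $\ess f = n > \qa f$ does not hold even though its proof applies.
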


\begin{proof}
\eqref{itemlemmasum1}$\implies$\eqref{itemlemmasum2}.
Assume that $\ess f = n$ and $\qa f = n - p$. By the definition of quasi-arity, there exists an essentially $(n - p)$-ary support $g \colon A^n \to B$ of $f$. Setting $h := f - g$, we have $f = h + g$. Since $g|_{\Aneq} = f|_{\Aneq}$ by the definition of support, we have that $h|_{\Aneq} \equiv 0$. Furthermore, $h$ is not identically $0$, because otherwise we would have that $f = g$, which constitutes a contradiction to $\ess g = n - p < n = \ess f$.

\eqref{itemlemmasum2}$\implies$\eqref{itemlemmasum1}.
Assume \eqref{itemlemmasum2}.
By Lemma~\ref{lem:qaess}, $\qa f = \ess f|_{\Aneq} = \ess g|_{\Aneq}$, and by Lemma~\ref{lem:diagwitness}, $\ess g|_{\Aneq} = \ess g = n - p$; hence $\qa f = n - p$. Suppose for contradiction that $\ess f < n$, then $\ess f = \qa f = n - p$ by Lemma~\ref{lem:m-ary}. Both $f$ and $g$ are essentially $(\qa f)$-ary supports of $f$; therefore it follows from Proposition~\ref{prop:uniqueSupport} that $f = g$. Thus $h \equiv 0$, which yields a contradiction. 

For the uniqueness of the decomposition $f = h + g$, the function $g$ in the decomposition $f = h + g$ is clearly an essentially $(\qa f)$-ary support of $f$. By the assumption that $\qa f < \ess f$, Proposition~\ref{prop:uniqueSupport} implies that $g$ is uniquely determined, and therefore so is $h$.
\end{proof}

\begin{proof}[Proof of Theorem~\ref{thm:sum}]
Observe that condition \eqref{itemthmsum2} is the same as condition \eqref{itemlemmasum2} of Lemma~\ref{lemma:sum}. The latter is equivalent to \eqref{itemlemmasum1} by Lemma~\ref{lemma:sum}, and \eqref{itemlemmasum1} is equivalent to \eqref{itemthmsum1} by Theorem~\ref{thm:gap}~\eqref{thmitem1}. The uniqueness of the decomposition $f = h + g$ follows from Lemma~\ref{lemma:sum}.
\end{proof}

%%%%%%%%%%%%%%%%%%%%%%%%%%%%%%%%%%%%%%
%%%%% Functions with arity gap 2 %%%%%
%%%%%%%%%%%%%%%%%%%%%%%%%%%%%%%%%%%%%%

\section{Functions with arity gap $2$}
\label{sec:gap2}

We prove an analogue of Theorem~\ref{thm:sum} for the case $\gap f = 2$. If $\qa f = n - 2$, then Lemma~\ref{lemma:sum} can be applied, so we only consider the case when $f|_{\Aneq}$ is determined by $\oddsupp$ (see Theorem~\ref{thm:gap}~\eqref{thmitem2}). In this case we cannot expect $f$ to have a support of arity $n-2$, but we may look for a support which is a sum of $(n-2)$-ary functions. We will prove that such a support exists if $B$ is a \emph{Boolean group,} i.e., it is an Abelian group such that $x + x = 0$ holds identically. (However, this is not true for arbitrary groups; this will be discussed in a forthcoming paper~\cite{CLW}.)

First we need to introduce a notation. Let $\varphi \colon A^{n-2} \to B$ be a function that is determined by $\oddsupp$, i.e.,
$\varphi = \varphi^{\ast} \circ \oddsupp$, for some function $\varphi^{\ast} \colon \mathcal{P}(A) \to B$. Let $\widetilde{\varphi}$ be the $n$-ary function defined by
\begin{equation}
\widetilde{\varphi}(x_1, \dots, x_n) = \sum_{\substack{k < n \\ 2 \mid n-k}} \,\, \sum_{1 \leq i_1 < \dots <i_k \leq n} \varphi^{\ast} (\oddsupp (x_{i_1}, \dots, x_{i_k})).
\label{fitilde def}
\end{equation}
Observe that each summand is a variable identification minor of $\varphi$, namely
\[
\varphi^{\ast}(\oddsupp(x_{i_1}, \dots, x_{i_k})) = \varphi (x_{i_1}, \dots, x_{i_k}, y, \dots, y),
\]
where the number of $y$'s is $n - 2 - k$, which is an even number; therefore $y$ is indeed an inessential variable of the function on the right-hand side; moreover, the order of the variables is irrelevant. The function $\widetilde{\varphi}$ is obviously totally symmetric, and according to the following lemma, $\widetilde{\varphi}|_{\Aneq}$ is determined by $\oddsupp$; hence $\widetilde{\varphi}$ is determined by $\oddsupp$ as well according to Remark~\ref{rem:oddsupp2}.

\begin{lemma}
\label{lemma fitilde from fistar}
Assume that $(B; +)$ is a Boolean group with neutral element $0$. Let $\varphi \colon A^{n-2} \to B$ be a function determined by $\oddsupp$. Then for all $(x_1, \dots, x_n) \in \Aneq$ we have
\[
\widetilde{\varphi}(x_1, \dots, x_n) = \varphi^{\ast}(\oddsupp(x_1, \dots, x_n)).
\]
\end{lemma}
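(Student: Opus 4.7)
The plan is to fix an arbitrary tuple $(x_1, \dots, x_n) \in \Aneq$ and exploit the fact that at least two of its coordinates must agree. Since the defining sum of $\widetilde{\varphi}$ runs over \emph{all} $k$-element subsets of indices and $\varphi^{\ast} \circ \oddsupp$ is invariant under permutations of its arguments, $\widetilde{\varphi}$ is totally symmetric; hence I may assume without loss of generality that $x_{n-1} = x_n = b$ for some $b \in A$. The target identity then reduces to
\[
\widetilde{\varphi}(x_1, \dots, x_{n-2}, b, b) = \varphi^{\ast}(\oddsupp(x_1, \dots, x_{n-2})),
\]
since adjoining the pair $b, b$ to the tuple does not affect $\oddsupp$.

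The key step is to reindex the sum in \eqref{fitilde def} by conditioning each subset $I \subseteq \{1, \dots, n\}$ on its intersection with $\{n-1, n\}$. Writing $T_I := \oddsupp(x_i : i \in I)$, this splits $\widetilde{\varphi}(x_1, \dots, x_n)$ into four pieces, and I would use the elementary identities $T_{I' \cup \{n-1\}} = T_{I' \cup \{n\}} = T_{I'} \triangle \{b\}$ and $T_{I' \cup \{n-1,n\}} = T_{I'}$ (valid for $I' \subseteq \{1, \dots, n-2\}$) to rewrite each of the four pieces as a sum over $I' \subseteq \{1, \dots, n-2\}$ with the appropriate parity constraint on $\card{I'}$ and with upper bound $\card{I'} \leq n-2$ or $\card{I'} < n-2$ inherited from $k < n$.

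The Boolean group hypothesis enters at the very end. The two middle pieces (corresponding to $n-1 \in I$ but $n \notin I$, and vice versa) contribute identical sums, so they cancel since $x + x = 0$. Among what remains, every term with $\card{I'} < n-2$ appears twice---once from the case ``$I \cap \{n-1, n\} = \emptyset$'' and once from the case ``$I \supseteq \{n-1, n\}$''---so it likewise cancels. The single surviving term corresponds to $\card{I'} = n - 2$ (note $n - 2 \equiv n \pmod{2}$, so the parity constraint is satisfied), for which the only admissible subset is $I' = \{1, \dots, n-2\}$ itself, yielding exactly $\varphi^{\ast}(\oddsupp(x_1, \dots, x_{n-2}))$.

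The main obstacle, such as it is, lies in the bookkeeping: one must verify that the constraints ``$k < n$ and $2 \mid n-k$'' translate correctly when $k$ is replaced by $\card{I'}$, $\card{I'} + 1$, or $\card{I'} + 2$, and that the strict inequality $\card{I'} < n - 2$ forced by the ``$\{n-1,n\} \subseteq I$'' case is precisely what mismatches with the unrestricted case and leaves the single ``top'' term uncancelled. No deeper ingredients are needed once the Boolean-group cancellation principle is in view.
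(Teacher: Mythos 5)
Your argument is correct, but it follows a different route from the paper's. You normalize the repeated entry into the last two coordinates (justified by the total symmetry of both sides and of $\Aneq$) and then run an involution-style cancellation on the index sets: subsets containing exactly one of $n-1,n$ pair off with each other, subsets disjoint from $\{n-1,n\}$ pair off with their unions with $\{n-1,n\}$, and the only unmatched term is $I'=\{1,\dots,n-2\}$, whose partner would have $k=n$ and is excluded by the constraint $k<n$; this isolates $\varphi^{\ast}(\oddsupp(x_1,\dots,x_{n-2}))=\varphi^{\ast}(\oddsupp(x_1,\dots,x_n))$ directly. The paper instead adds the missing $k=n$ summand to both sides and shows that the completed sum vanishes identically on $\Aneq$ by a counting argument: for a tuple with $t<n$ distinct entries, each possible value of $\oddsupp(x_{i_1},\dots,x_{i_k})$ is produced by exactly $2^{s_1-1}\cdots 2^{s_t-1}=2^{n-t}$ index subsets, an even number, so all terms cancel in a Boolean group. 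Your pairing argument needs no multiplicity count and makes visible exactly which summand survives, at the cost of the WLOG reduction and some parity bookkeeping on $\card{I'}$, $\card{I'}+1$, $\card{I'}+2$ (which you handle correctly; note only that the bound $\card{I'}\leq n-2$ in the unrestricted cases comes from $I'\subseteq\{1,\dots,n-2\}$ rather than from $k<n$, while $k<n$ is what forces the crucial strict bound in the $\{n-1,n\}\subseteq I$ case). The paper's count works uniformly over all positions of the repeated entries and yields the slightly stronger observation that the full symmetric sum over all admissible $k\leq n$ is identically zero on $\Aneq$; both proofs invoke the Boolean group hypothesis at the same point, namely that terms occurring in pairs cancel.
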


\begin{proof}
We have to show that $\widetilde{\varphi}(x_1, \dots, x_n) + \varphi^{\ast}(\oddsupp(x_1, \dots, x_n)) = 0$ holds identically on $\Aneq$. This function differs from the right-hand side of \eqref{fitilde def} only by a summand corresponding to $k = n$:
\begin{multline*}
\widetilde{\varphi}(x_1, \dots, x_n) + \varphi^{\ast}(\oddsupp(x_1, \dots, x_n)) \\
= \sum_{\substack{k \leq n \\ 2 \mid n - k}} \,\, \sum_{1 \leq i_1 < \cdots < i_k \leq n}
\varphi^{\ast}(\oddsupp(x_{i_1}, \dots, x_{i_k})).
\end{multline*}

Let us fix a set $\{a_1, \dots, a_r\} \subseteq A$ and $(x_1, \dots, x_n) \in \Aneq$. We count how many summands there are in the above sum with $\oddsupp(x_{i_1}, \dots, x_{i_k}) = \{a_1, \dots, a_r\}$. If this set occurs at all, then $a_1, \dots, a_r$ can be found among the components of $(x_1, \dots, x_n)$. Let us denote the rest of the elements appearing in $(x_1, \dots, x_n)$ by $a_{r+1}, \dots, a_t$, and for $j = 1, \dots, t$ let $s_j$ stand for the number of occurrences of $a_j$ in $(x_1, \dots, x_n)$. Thus $\{x_1, \dots, x_n\} = \{a_1, \dots, a_t\}$ and $s_1 + \dots + s_t = n$; moreover, $t < n$, because $(x_1, \dots, x_n) \in \Aneq$. If we want to choose $i_1, \dots, i_k$ such that $\oddsupp(x_{i_1}, \dots, x_{i_k}) = \{a_1, \dots, a_r\}$, then we have to choose an odd number of the $s_j$ places occupied by $a_j$ in $(x_1, \dots, x_n)$ for $j = 1, \dots, r$, and an even number of the $s_j$ places occupied by $a_j$ for $j = r + 1, \dots, t$. A set of $s_j$ elements has $2^{s_j - 1}$ subsets with odd cardinality, and likewise $2^{s_j - 1}$ subsets with even cardinality, so the number of possibilities is $2^{s_j - 1}$ in both cases. Thus there are altogether $2^{s_1 - 1} \cdot \ldots \cdot 2^{s_t - 1} = 2^{n-t}$ summands with the same $\oddsupp(x_{i_1}, \dots, x_{i_k})$. This number is even since $t < n$; therefore the terms will cancel each other. This holds for any set $\{a_1, \dots, a_r\}$ and any $(x_1, \dots, x_n) \in \Aneq$; hence $\widetilde{\varphi}(x_1, \dots, x_n) + \varphi^{\ast}(\oddsupp(x_1, \dots, x_n))$ is identically zero on $\Aneq$.
\end{proof}

\begin{theorem}
\label{thm oddsupp existence}
Assume that $(B; +)$ is a Boolean group with neutral element $0$. Let $f \colon A^{n} \to B$ be a function such that $f|_{\Aneq}$ is determined by $\oddsupp$. Then $f$ has a support that is a sum of functions of arity at most $n - 2$.
\end{theorem}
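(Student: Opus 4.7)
The plan is to construct the desired support explicitly using the $\widetilde{\varphi}$ construction from Lemma~\ref{lemma fitilde from fistar}. Observe that $\widetilde{\varphi}$, by its very definition in \eqref{fitilde def}, is already exhibited as a sum of functions of arity at most $n-2$, while Lemma~\ref{lemma fitilde from fistar} guarantees that on $\Aneq$ it recovers $\varphi^{\ast} \circ \oddsupp$. So the problem reduces to choosing the right $\varphi$.

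First, I would use the hypothesis that $f|_{\Aneq}$ is determined by $\oddsupp$ to write $f|_{\Aneq} = f^{\ast} \circ \oddsupp|_{\Aneq}$ for some $f^{\ast} \colon \mathcal{P}(A) \to B$. By Remark~\ref{rem:oddsupp}, the only subsets of $A$ that arise as $\oddsupp(x_1, \dots, x_n)$ for $(x_1, \dots, x_n) \in \Aneq$ are those of cardinality at most $n-2$ having the same parity as $n$, and these are exactly the sets of the form $\oddsupp(y_1, \dots, y_{n-2})$ for $(y_1, \dots, y_{n-2}) \in A^{n-2}$. This motivates defining $\varphi \colon A^{n-2} \to B$ by $\varphi := f^{\ast} \circ \oddsupp$, so that $\varphi$ is determined by $\oddsupp$ with $\varphi^{\ast} := f^{\ast}$.

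Next I would form $\widetilde{\varphi}$ as in \eqref{fitilde def} and invoke Lemma~\ref{lemma fitilde from fistar}, which yields
\[
\widetilde{\varphi}(x_1, \dots, x_n) = f^{\ast}(\oddsupp(x_1, \dots, x_n)) = f(x_1, \dots, x_n)
\]
for every $(x_1, \dots, x_n) \in \Aneq$. Hence $\widetilde{\varphi}$ is a support of $f$. Finally, each summand $\varphi^{\ast}(\oddsupp(x_{i_1}, \dots, x_{i_k}))$ in \eqref{fitilde def} depends on only the $k$ variables $x_{i_1}, \dots, x_{i_k}$, and the summation indices satisfy $k < n$ with $n - k$ even, which forces $k \leq n - 2$. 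Therefore $\widetilde{\varphi}$ is a sum of functions of arity at most $n-2$, as required.

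The genuine combinatorial content has already been packaged into Lemma~\ref{lemma fitilde from fistar} via its parity-counting argument over subsets of odd and even cardinality, so I do not anticipate any substantive obstacle in the theorem itself. The only point one must check with some care is that the summands in \eqref{fitilde def} really can be regarded as $(n-2)$-ary (or lower arity) functions in the sense required by the theorem statement, and this is immediate from the bound $k \leq n - 2$ noted above.
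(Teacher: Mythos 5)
Your proposal is correct and follows essentially the same route as the paper: the paper also takes $\varphi^{\ast} = f^{\ast}$ (introducing $\varphi$ as the identification minor $f(x_1,\dots,x_{n-2},y,y)$, which coincides pointwise with your $f^{\ast}\circ\oddsupp$) and then applies Lemma~\ref{lemma fitilde from fistar} to conclude that $\widetilde{\varphi}$ is the desired support, with the arity bound read off from \eqref{fitilde def} exactly as you do.
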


\begin{proof}
Since $f|_{\Aneq}$ is determined by $\oddsupp$, there is a function $f^{\ast} \colon \mathcal{P}(A) \to B$ such that $f|_{\Aneq} = f^{\ast} \circ \oddsupp$. The function $\varphi(x_1, \dots, x_{n-2}) := f(x_1, \dots, x_{n-2}, y, y)$ is determined by $\oddsupp$, and we can suppose that the corresponding function $\varphi^{\ast}$ coincides with $f^{\ast}$, since
\[
\varphi(x_1, \dots, x_{n-2}) = f(x_1, \dots, x_{n-2}, y, y) = f^{\ast}(\oddsupp(x_1, \dots, x_{n-2}))
\]
for all $(x_1, \dots, x_{n-2}) \in A^{n-2}$. Applying Lemma~\ref{lemma fitilde from fistar} we get the following equality for every $(x_1, \dots, x_n) \in \Aneq$:
\begin{align*}
\widetilde{\varphi}(x_1, \dots, x_n) &  = \varphi^{\ast}(\oddsupp(x_1, \dots, x_n)) \\
& = f^{\ast}(\oddsupp(x_1, \dots, x_n)) = f(x_1, \dots, x_n).
\end{align*}
This shows that $\widetilde{\varphi}$ is a support of $f$, and from \eqref{fitilde def} it is clear that $\widetilde{\varphi}$ is a sum of at most $(n-2)$-ary functions.
\end{proof}

\begin{remark}
\label{rem n>|A|}
Let us note that if $A$ is finite and $n > \card{A}$, then $A^n = \Aneq$; hence the only support of $f$ is $f$ itself. In this case the above theorem implies that $f$ itself can be expressed as a sum of functions of arity at most $n-2$.
\end{remark}

Next we prove a uniqueness companion to the above theorem. Here we do not need the assumption that $B$ is a Boolean group: if there exists a support that is a sum of at most $(n-2)$-ary functions, then it is unique for any Abelian group $B$. Note that this does not exclude the possibility that this unique support can be written in more than one way as a sum of at most $(n-2)$-ary functions. Observe also that the following theorem generalizes Proposition~\ref{prop:uniqueSupport} in the case $m = n - 2$.

\begin{theorem}
\label{oddsupp uniqueness}
Assume that $(B;+)$ is an Abelian group with neutral element $0$. Then a function $f \colon A^n \to B$ can have at most one support that is a sum of functions of arity at most $n-2$.
\end{theorem}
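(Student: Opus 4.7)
The plan is to establish the theorem by passing to the difference of two candidate supports and showing it is zero. If $g_1,g_2 \colon A^n \to B$ are both supports of $f$ and each is a sum of functions of arity at most $n-2$, then $d := g_1 - g_2$ is again a sum of functions of arity at most $n-2$ (the abelian group law lets us combine the two sums) and satisfies $d|_{\Aneq} \equiv 0$ because $g_1|_{\Aneq} = f|_{\Aneq} = g_2|_{\Aneq}$. The case $n = 2$ is immediate (sums of $0$-ary functions are constants, and a constant vanishing on the diagonal is $0$), so assume $n \geq 3$. The real content is thus: if $d \colon A^n \to B$ is a sum of functions of arity at most $n-2$ and $d|_{\Aneq} \equiv 0$, then $d \equiv 0$.

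The method I would use is a finite-difference / inclusion-exclusion argument based on the index set $J := \{1, \dots, n-1\}$, whose size is exactly one more than the arity bound $n-2$. Fix an arbitrary $\vect{c} = (c_1, \dots, c_n) \in A^n$ and set $\beta := c_n$. For each $K \subseteq J$, define $\vect{y}_K \in A^n$ by $(\vect{y}_K)_i = c_i$ if $i \in K$, $(\vect{y}_K)_i = \beta$ if $i \in J \setminus K$, and $(\vect{y}_K)_n = \beta$. Then $\vect{y}_J = \vect{c}$, while for every proper $K \subsetneq J$ some $i \in J \setminus K$ satisfies $(\vect{y}_K)_i = \beta = (\vect{y}_K)_n$, so $\vect{y}_K \in \Aneq$. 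Using $d|_{\Aneq} \equiv 0$ to kill every term with $K \subsetneq J$, one obtains
\[
\sum_{K \subseteq J} (-1)^{|J \setminus K|}\, d(\vect{y}_K) \;=\; d(\vect{c}).
\]

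Next, writing $d = \sum_{\ell} d_\ell$ with each $d_\ell$ depending only on the variables indexed by some $I_\ell \subseteq \{1, \dots, n\}$ with $|I_\ell| \leq n-2$, I would check that the same alternating sum applied to each $d_\ell$ vanishes. Since $|J| = n-1 > |I_\ell|$, there exists $j \in J \setminus I_\ell$; the involution $K \leftrightarrow K \triangle \{j\}$ on subsets of $J$ leaves $d_\ell(\vect{y}_K)$ unchanged (as $d_\ell$ ignores coordinate $j$) while flipping the sign $(-1)^{|J \setminus K|}$, so the sum collapses to $0$. Summing over $\ell$ and comparing with the display gives $d(\vect{c}) = 0$; since $\vect{c}$ was arbitrary, $d \equiv 0$, whence $g_1 = g_2$.

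The main delicacy is calibrating $|J|$ so that both sides of the comparison work: $|J|$ must be large enough that each summand $d_\ell$ necessarily omits some coordinate in $J$ (so the pairing argument cancels it), yet small enough that every proper subset $K \subsetneq J$ forces a repetition with coordinate $n$, placing $\vect{y}_K$ in $\Aneq$. The choice $|J| = n-1$ threads this needle exactly. Beyond this combinatorial setup, no hypothesis other than $(B;+)$ being abelian is used, consistent with the statement's generality.
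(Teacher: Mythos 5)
Your proof is correct, and it takes a genuinely different route from the paper's. The paper argues by descent: it takes the difference $g = g_1 - g_2$, picks the minimal $k$ such that $g$ is a sum of at most $k$-ary functions, substitutes a single constant into the last $n-k \geq 2$ coordinates (landing in $\Aneq$, where $g$ vanishes) to solve for each summand $g_J$ as a sum of at most $(k-1)$-ary functions, and contradicts minimality; the base case $k=0$ gives $g \equiv 0$. You instead prove the key claim in one shot by a finite-difference identity: evaluating the alternating sum over subsets $K$ of $J=\{1,\dots,n-1\}$ at the points $\vect{y}_K$ (all of which except $\vect{y}_J=\vect{c}$ lie in $\Aneq$) isolates $d(\vect{c})$, while for each summand $d_\ell$ of arity at most $n-2$ the involution $K \mapsto K \mathbin{\triangle} \{j\}$ with $j \in J \setminus I_\ell$ kills the corresponding alternating sum, so $d(\vect{c})=0$. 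The calibration $\lvert J\rvert = n-1$ is exactly right, and only commutativity of $B$ is used, matching the hypotheses. What your argument buys is an explicit interpolation formula expressing the value of any sum of at most $(n-2)$-ary functions at an arbitrary point in terms of its values on $\Aneq$ (so the restriction to $\Aneq$ determines such a function), and it avoids the paper's bookkeeping of regrouping summands by support and the minimality argument; the paper's descent, on the other hand, is structurally closer to the substitution techniques used elsewhere in the paper and needs no sign/parity combinatorics. Your separate treatment of $n=2$ is fine (a constant vanishing on the nonempty diagonal is $0$), though your main argument in fact covers that case too once $0$-ary summands are read as constants.
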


\begin{proof}
Suppose that $g_1$ and $g_2$ are supports of $f$ and both of them can be expressed as sums of at most $(n-2)$-ary functions. Then $g = g_1  - g_2$ is also a sum of at most $(n-2)$-ary functions, and $g|_{\Aneq}$ is constant zero. Let us choose the smallest $k$ such that $g$ can be written as a sum of functions of arity at most $k$. If $k = 0$, then $g$ is constant; hence $g = 0$ and then we can conclude that $g_1 = g_2$. To complete the proof, we just have to show that the assumption $1 \leq k \leq n-2$ leads to a contradiction.

In the expression of $g$ as a sum of at most $k$-ary functions we can combine functions depending on the same set of variables to a single function, and by introducing dummy variables we can make all of the summands $n$-ary functions. Then $g$ takes the following form:
\begin{equation}
g(x_1, \dots, x_n) = \sum_I g_I(x_1, \dots, x_n),
\label{g from  g_I}
\end{equation}
where $I$ ranges over the $k$-element subsets of $\{1, 2, \dots, n\}$, and $g_I \colon A^n \to B$ is a function which only depends on some of the variables $x_i$ ($i \in I$). Let us choose a constant $c \in A$ and substitute this into the last $n - k$ variables. Since $n - k \geq 2$, the resulting vector will lie in $\Aneq$; hence the value of $g$ will be zero:
\[
0 = g(x_1, \dots, x_k, c, \dots, c) = \sum_I g_I(x_1, \dots, x_k, c, \dots, c).
\]

Let $J = \{1, \dots, k\}$, and let us express $g_J$ from the above equation:
\[
g_J(x_1, \dots, x_n) = g_J(x_1, \dots, x_k, c, \dots, c) = - \sum_{I \neq J} g_I(x_1, \dots, x_k, c, \dots, c).
\]
For each $k$-element subset $I$ of $\{1, 2, \dots, n\}$, the function $g_I(x_1, \dots, x_k, c, \dots, c)$ depends only on the variables $x_i$ ($i \in I \cap J$); thus its essential arity is at most $k-1$ whenever $I$ is different from $J$. This means that the above expression for $g_J$ can be regarded as a sum of at most $(k-1)$-ary functions (after getting rid of the dummy variables). We can get a similar expression for $g_J$ for any $k$-element subset $J$ of $\{1, 2, \dots, n\}$, and substituting these into \eqref{g from g_I} we see that $g$ is a sum of at most $(k-1)$-ary functions. This contradicts the minimality of $k$, which shows that $k \geq 1$ is indeed impossible.
\end{proof}

Combining the above results with Theorem~\ref{thm:gap} and Lemma~\ref{lemma:sum} we get the characterization of functions $f \colon A^n \to B$ with $\gap f = 2$ for the case when $B$ is a Boolean group.

\begin{theorem}
\label{thm oddsupp}
Assume that $(B;+)$ is a Boolean group with neutral element $0$. Let $f \colon A^n \to B$ be a function of arity at least $4$. Then the following two conditions are equivalent:

\begin{enumerate}[\indent\rm (1)]
\item \label{thm oddsupp 1}
$\ess f = n$ and $\gap f = 2$.

\item \label{thm oddsupp 2}
There exist functions $g, h \colon A^n \to B$ such that $f = h + g$, $h|_{\Aneq} \equiv 0$, and either

\begin{enumerate}[\rm (a)]
\item \label{thm oddsupp 2a}
$\ess g = n - 2$ and $h \not\equiv 0$, or

\item \label{thm oddsupp 2b}
$g = \widetilde{\varphi}$ for some nonconstant $(n-2)$-ary function $\varphi$ that is determined by $\oddsupp$.
\end{enumerate}
\end{enumerate}
The decomposition $f = h + g$ given above is unique.
\end{theorem}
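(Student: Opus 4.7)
The plan is to leverage Theorem~\ref{thm:gap}(\ref{thmitem2}) together with the results of this section: Lemma~\ref{lemma:sum} handles the subcase $\qa f = n-2$, while Theorems~\ref{thm oddsupp existence} and~\ref{oddsupp uniqueness} handle the subcase $\qa f = n$ with $f|_{\Aneq}$ determined by $\oddsupp$.

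For (\ref{thm oddsupp 1})$\Rightarrow$(\ref{thm oddsupp 2}): Theorem~\ref{thm:gap}(\ref{thmitem2}) gives two possibilities. If $\qa f = n - 2$, Lemma~\ref{lemma:sum} with $p = 2$ immediately produces~(\ref{thm oddsupp 2a}). If instead $\qa f = n$ and $f|_{\Aneq}$ is determined by $\oddsupp$, Theorem~\ref{thm oddsupp existence} (and its proof) supplies a support of the form $g = \widetilde{\varphi}$, where $\varphi(x_1,\dots,x_{n-2}) := f(x_1,\dots,x_{n-2},y,y)$ is determined by $\oddsupp$; setting $h := f - g$ gives $h|_{\Aneq}\equiv 0$. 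Nonconstancy of $\varphi$ follows because $\qa f = n \geq 4$ forces $f|_{\Aneq}$ to be nonconstant, hence so is its restatement $\varphi$.

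For (\ref{thm oddsupp 2})$\Rightarrow$(\ref{thm oddsupp 1}): In case~(\ref{thm oddsupp 2a}), Lemma~\ref{lemma:sum} directly yields $\ess f = n$ and $\qa f = n - 2$, whence $\gap f = 2$ by Theorem~\ref{thm:gap}(\ref{thmitem2}). In case~(\ref{thm oddsupp 2b}), Lemma~\ref{lemma fitilde from fistar} shows that $f|_{\Aneq} = \widetilde{\varphi}|_{\Aneq}$ is determined by $\oddsupp$. I then argue $\ess \widetilde{\varphi}|_{\Aneq} = n$: this partial function is totally symmetric on the permutation-closed domain $\Aneq$, so its essential arity is either $0$ or $n$; and it is nonconstant because $\widetilde{\varphi}(x_1,\dots,x_{n-2},y,y) = \varphi(x_1,\dots,x_{n-2})$ is. A short connectedness argument—for $n \geq 3$, any two tuples of $\Aneq$ are linked by single-coordinate flips within $\Aneq$, since one can always create an auxiliary repetition before modifying any prescribed coordinate—rules out essential arity $0$. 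Then $\qa f = n$ by Lemma~\ref{lem:qaess}, Lemma~\ref{lem:m-ary} forces $\ess f = n$, and Theorem~\ref{thm:gap}(\ref{thmitem2}) gives $\gap f = 2$.

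For uniqueness, in either alternative the summand $g$ is a support of $f$ (as $h|_{\Aneq}\equiv 0$) that can be written as a sum of functions of arity at most $n - 2$: an essentially $(n-2)$-ary $g$ is itself such a sum after discarding its two inessential variables, while $g = \widetilde{\varphi}$ is of this form by the defining formula~(\ref{fitilde def}). Theorem~\ref{oddsupp uniqueness} then pins down $g$, and hence $h = f - g$, uniquely. The main technical hurdle is the verification that $\ess \widetilde{\varphi}|_{\Aneq} = n$ in case~(\ref{thm oddsupp 2b})—in particular, excluding essential arity $0$ by means of the connectedness of $\Aneq$ under single-coordinate flips for $n \geq 3$.
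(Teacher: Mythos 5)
Your proof is correct and follows essentially the same route as the paper: split via Theorem~\ref{thm:gap}~\eqref{thmitem2}, use Lemma~\ref{lemma:sum} for case~\eqref{thm oddsupp 2a}, Theorem~\ref{thm oddsupp existence} and Lemma~\ref{lemma fitilde from fistar} for case~\eqref{thm oddsupp 2b}, and Theorem~\ref{oddsupp uniqueness} for uniqueness. The only (harmless) divergence is in ruling out $\qa f = 0$ in case~\eqref{thm oddsupp 2b}: the paper shows $\varphi^{\ast}$ would be constant on all relevant subsets, forcing $\varphi$ constant, whereas you observe that $\widetilde{\varphi}|_{\Aneq}$ is nonconstant since $\widetilde{\varphi}(x_1,\dots,x_{n-2},y,y)=\varphi(x_1,\dots,x_{n-2})$ and then invoke a connectedness argument for $\Aneq$ under single-coordinate changes -- both are valid, and your connectedness step could even be shortcut by Lemma~\ref{lem:qaess} together with the definition of quasi-arity.
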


\begin{proof}
The uniqueness follows immediately from Theorem~\ref{oddsupp uniqueness}, so we just need to show that \eqref{thm oddsupp 1} and \eqref{thm oddsupp 2} are equivalent.

\eqref{thm oddsupp 1}$\implies$\eqref{thm oddsupp 2}.
By Theorem~\ref{thm:gap}~\eqref{thmitem2} we have two cases: either $\qa f = n - 2$, or $\qa f = n$ and $f|_{\Aneq}$ is determined by $\oddsupp$. In the first case Lemma~\ref{lemma:sum} shows that \eqref{thm oddsupp 2a} holds. In the second case we apply Theorem~\ref{thm oddsupp existence} to find an $(n-2)$-ary function $\varphi$ such that $g = \widetilde{\varphi}$ is a support of $f$, and we let $h = f + g$. If $\varphi$ is constant, then so is $\widetilde{\varphi}$, and then $f|_{\Aneq}$ is constant as well, contradicting that $\qa f = n$.

\eqref{thm oddsupp 2}$\implies$\eqref{thm oddsupp 1}.
The case \eqref{thm oddsupp 2a} is settled by Lemma~\ref{lemma:sum} and Theorem~\ref{thm:gap}~\eqref{thmitem2}, so let us assume that \eqref{thm oddsupp 2b} holds. It is clear that $f|_{\Aneq}$ is determined by $\oddsupp$, so according to Theorem~\ref{thm:gap} it suffices to show that $\qa f = \ess f = n$. The function $f|_{\Aneq} = \widetilde{\varphi}|_{\Aneq}$ is totally symmetric, hence it either depends on all of its variables, or on one of them, i.e., either $\qa f = n$ or $\qa f = 0$. In the first case we are done, since $\ess f$ cannot be less than $\qa f$. In the second case Lemma~\ref{lemma fitilde from fistar} implies that $\varphi^{\ast}$ takes on the same value for every subset of $A$ of size $n - 2, n - 4, \dots$. Since only these values of $\varphi^{\star}$ are relevant for determining $\varphi = \varphi^{\ast} \circ \oddsupp$, we can conclude that $\varphi$ is constant, contrary to our assumption.
\end{proof}

%%%%%%%%%%%%%%%%%%%%%%%%%%%%%%%%%%%%%%%%%%%%%%%%%%%%%%%%%%%%%%%%%
%%%%% The number of finite functions with a given arity gap %%%%%
%%%%%%%%%%%%%%%%%%%%%%%%%%%%%%%%%%%%%%%%%%%%%%%%%%%%%%%%%%%%%%%%%

\section{The number of finite functions with a given arity gap}
\label{sec:enumerate}

The classification of functions according to their arity gap (Theorem~\ref{thm:gap}) and the unique decompositions of functions provided by Theorem~\ref{thm:sum} can be applied to count, for finite sets $A$ and $B$, and for each $n$ and $p$ the number of functions $f \colon A^n \to B$ with $\gap f = p$.

For positive integers $m$, $i$, we will denote by $(m)_i$ the \emph{falling factorial}
\[
(m)_i := m (m - 1) \cdots (m - (i - 1)).
\]
Note that if $i > m$, then $(m)_i = 0$, because one of the factors in the above expression is $0$.

Let $A$ and $B$ be finite sets with $\card{A} = k$, $\card{B} = \ell$. Let us denote by $G^{k \ell}_{n p}$ the number of functions $f \colon A^n \to B$ with $\ess f = n$ and $\gap f = p$, and let us denote by $Q^{k \ell}_{n m}$ the number of functions $f \colon A^n \to B$ with $\ess f = n$ and $\qa f = m$.

It is well-known (see Wernick~\cite{Wernick}) that the number of functions $g \colon A^n \to B$ that depend on exactly $r$ variables ($0 \leq r \leq n$) is
\[
U^{k \ell}_{nr} := \binom{n}{r} \sum_{i = 0}^r (-1)^i \binom{r}{i} \ell^{k^{r-i}}.
\]
The number of functions $h \colon A^n \to B$ such that $h|_{\Aneq} \equiv 0$, $h \not\equiv 0$ is
\[
V^{k \ell}_n := \ell^{(k)_n} - 1.
\]

\begin{lemma}
\label{lemma:Qklnm}
For $k \geq 2$, $\ell \geq 2$, $n \geq 3$,
\begin{equation}
Q^{k \ell}_{n m} =
\begin{cases}
U^{k \ell}_{n m} V^{k \ell}_n, & \text{if $m < n$,} \\
U^{k \ell}_{nn} \ell^{(k)_n} - V^{k \ell}_n \ell^{k^n}, & \text{if $m = n$.}
\end{cases}
\label{eq:Qklnn-p}
\end{equation}
\end{lemma}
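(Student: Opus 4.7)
The main tool is Lemma~\ref{lemma:sum}, which delivers a bijection in the case $m < n$; the case $m = n$ will then follow by complementary counting. Since $Q^{k\ell}_{nm}$ depends only on the cardinalities $k = \card{A}$ and $\ell = \card{B}$, I would begin by fixing an arbitrary abelian group structure on $B$ (for instance, $\mathbb{Z}/\ell\mathbb{Z}$) so that Lemma~\ref{lemma:sum} becomes applicable; the hypothesis $n \geq 3$ of that lemma matches our standing assumption.

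For $m < n$, setting $p := n - m \in \{1, \dots, n\}$, Lemma~\ref{lemma:sum} states that the functions $f \colon A^n \to B$ with $\ess f = n$ and $\qa f = m$ are exactly those of the form $f = g + h$, where $\ess g = m$ and $h|_{\Aneq} \equiv 0$, $h \not\equiv 0$, and that this decomposition is unique. Hence $f \mapsto (g, h)$ is a bijection between the set counted by $Q^{k\ell}_{nm}$ and the Cartesian product of the set of functions $g \colon A^n \to B$ with $\ess g = m$ (of cardinality $U^{k\ell}_{nm}$) and the set of functions $h \colon A^n \to B$ vanishing on $\Aneq$ but not identically (of cardinality $V^{k\ell}_n$, as noted just before the lemma). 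This immediately yields $Q^{k\ell}_{nm} = U^{k\ell}_{nm}\, V^{k\ell}_n$.

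For $m = n$, I would argue by complementation. Every $f$ with $\ess f = n$ satisfies $\qa f \in \{0, 1, \dots, n\}$ (using Lemma~\ref{lem:qaess} together with the fact that restriction to $\Aneq$ cannot increase essential arity), so the set of such functions is partitioned by the value of $\qa f$, giving
\[
U^{k\ell}_{nn} = Q^{k\ell}_{nn} + \sum_{m=0}^{n-1} Q^{k\ell}_{nm} = Q^{k\ell}_{nn} + V^{k\ell}_n \sum_{m=0}^{n-1} U^{k\ell}_{nm},
\]
where the second equality uses the formula just established. Since $\sum_{r=0}^{n} U^{k\ell}_{nr} = \ell^{k^n}$ counts all $n$-ary functions by their essential arity, we have $\sum_{m=0}^{n-1} U^{k\ell}_{nm} = \ell^{k^n} - U^{k\ell}_{nn}$. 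Substituting this in, solving for $Q^{k\ell}_{nn}$, and using the identity $1 + V^{k\ell}_n = \ell^{(k)_n}$ produces the stated expression after a routine simplification. There is no real obstacle in the argument: the substantive content is entirely in Lemma~\ref{lemma:sum}, and the remaining steps are elementary bookkeeping.
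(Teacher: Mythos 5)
Your proof is correct and follows essentially the same route as the paper: for $m<n$ it invokes the unique decomposition of Lemma~\ref{lemma:sum} to get the product count $U^{k\ell}_{nm}V^{k\ell}_n$, and for $m=n$ it uses the same complementary-counting computation with $\sum_{r=0}^{n}U^{k\ell}_{nr}=\ell^{k^n}$ and $1+V^{k\ell}_n=\ell^{(k)_n}$. Your explicit remarks about fixing an arbitrary group structure on $B$ and about the case $n>k$ being absorbed automatically (since then $V^{k\ell}_n=0$ and no admissible $h$ exists) are fine touches that the paper handles only implicitly or as a separate case.
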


\begin{proof}
By Lemma~\ref{lemma:sum}, for $3 \leq n \leq k$ and $m < n$,
\[
Q^{k \ell}_{nm} = U^{k \ell}_{nm} V^{k \ell}_n.
\]
If $n > k$, then $V^{k \ell}_n = 0$ and hence the right-hand side of the above equation is $0$ as well. Indeed, $Q^{k \ell}_{nm} = 0$ in this case, because for $f \colon A^n \to B$, $\qa f = \ess f$ whenever $n > k$.

Consider then the case that $m = n$. By the above formula, we have
\begin{equation}
Q^{k \ell}_{nn}
= U^{k \ell}_{nn} - \sum_{i = 0}^{n-1} Q^{k \ell}_{ni}
= U^{k \ell}_{nn} - \sum_{i = 0}^{n-1} U^{k \ell}_{ni} V^{k \ell}_n
= U^{k \ell}_{nn} - V^{k \ell}_n \sum_{i = 0}^{n-1} U^{k \ell}_{ni}.
\label{eq:Qklnn}
\end{equation}
The sum $\sum_{i = 0}^{n-1} U^{k \ell}_{ni}$ counts the number of functions $f \colon A^n \to B$ with $\ess f < n$; hence
\[
\sum_{i = 0}^{n-1} U^{k \ell}_{ni}
= \ell^{k^n} - U^{k \ell}_{nn}.
\]
Substituting this back to \eqref{eq:Qklnn}, we have
\begin{multline*}
Q^{k \ell}_{nn}
= U^{k \ell}_{nn} - V^{k \ell}_n (\ell^{k^n} - U^{k \ell}_{nn})
= U^{k \ell}_{nn}(1 + V^{k \ell}_n) - V^{k \ell}_n \ell^{k^n}
= U^{k \ell}_{nn} \ell^{(k)_n} - V^{k \ell}_n \ell^{k^n}.
\qedhere
\end{multline*}
\end{proof}

Let us denote by $O^{k \ell}_n$ the number of functions $f \colon A^n \to B$ such that $\ess f = n$, $\qa f = n$ and $f|_{\Aneq}$ is determined by $\oddsupp$.

\begin{lemma}
For $k \geq 2$, $\ell \geq 2$, $n \geq 2$,
\label{lem:Olmn}
\[
O^{k \ell}_n =
\begin{cases}
\ell^{2^{k - 1}} - \ell, & \text{if $n > k$,} \\
\ell^{(k)_n} (\ell^{S^k_n} - \ell), & \text{if $n \leq k$,}
\end{cases}
\]
where
\begin{equation}
S^k_n =
\begin{cases}
\sum_{i = 0}^{\frac{n}{2} - 1} \binom{k}{2i}, & \text{if $n$ is even,} \\
\sum_{i = 0}^{\frac{n - 1}{2} - 1} \binom{k}{2i + 1}, & \text{if $n$ is odd.}
\end{cases}
\label{eq:Skn}
\end{equation}
\end{lemma}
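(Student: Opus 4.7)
The plan is to split the count by whether $n > k$ or $n \leq k$, since the relevant range of $\oddsupp$ and the set $\Aneq$ behave differently in these two regimes.

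In the case $n > k$, I would use Remark~\ref{rem:an} to get $\Aneq = A^n$ and $\qa f = \ess f$, so that the conditions reduce to ``$f$ is determined by $\oddsupp$ and $\ess f = n$''. A short transposition argument shows that functions determined by $\oddsupp$ are totally symmetric, so $\ess f \in \{0, n\}$; hence $\ess f = n$ is equivalent to $f$ being nonconstant. Remark~\ref{rem:oddsupp} then identifies the range of $\oddsupp|_{A^n}$ with those subsets of $A$ whose cardinality is congruent to $n$ modulo~$2$ (the bound $\card{S} \leq n - 2$ is automatic since $\card{S} \leq k \leq n - 1$ and the parity restriction excludes $\card{S} = n - 1$), so this range has cardinality $2^{k-1}$. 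Counting functions $f^{\ast}$ on this range and removing the $\ell$ constants yields $\ell^{2^{k-1}} - \ell$.

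In the case $n \leq k$, I would factor the count as (number of admissible restrictions $f|_{\Aneq}$) times (number of extensions to $A^n \setminus \Aneq$). The complement $A^n \setminus \Aneq$ consists of the $(k)_n$ tuples with pairwise distinct entries, whose $f$-values are unconstrained. For $n \geq 3$, Lemma~\ref{lem:qaess} converts $\qa f = n$ into $\ess f|_{\Aneq} = n$, and Lemma~\ref{lem:m-ary} then forces $\ess f = n$, so the conditions on $f$ depend only on $f|_{\Aneq}$. By the same symmetry dichotomy, $\ess f|_{\Aneq} = n$ is equivalent to $f|_{\Aneq}$ being nonconstant. Remark~\ref{rem:oddsupp} says the range of $\oddsupp|_{\Aneq}$ has exactly $S^k_n$ elements, so the $\oddsupp$-determined restrictions number $\ell^{S^k_n}$, of which $\ell$ are constant, leaving $\ell^{S^k_n} - \ell$ admissible ones. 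Multiplying by $\ell^{(k)_n}$ gives the stated formula. The boundary $n = 2$ deserves a quick sanity check: $S^k_2 = \binom{k}{0} = 1$, so the formula evaluates to $0$, matching the fact that $\oddsupp$ is constantly $\emptyset$ on the diagonal, forcing any such $f$ to have $\qa f = 0 \neq 2$.

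The main obstacle I anticipate is establishing the dichotomy $\ess f|_{\Aneq} \in \{0, n\}$ for a partial function on $\Aneq$ determined by $\oddsupp$. The direction ``some variable essential implies all essential'' is immediate from the transposition argument, but ``no variable essential implies $f|_{\Aneq}$ constant'' requires showing that $\Aneq$ is connected under single-coordinate moves when $n \geq 3$. This can be verified by routing any two tuples through a constant tuple $(a, \dots, a)$: each intermediate step replaces one entry while retaining a repeated coordinate elsewhere, so every intermediate tuple stays in $\Aneq$. Once this dichotomy is in hand, the remainder of the proof is bookkeeping with Remarks~\ref{rem:an} and~\ref{rem:oddsupp} and the two lemmas.
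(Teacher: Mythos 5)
Your proposal is correct and follows essentially the same route as the paper: split into $n > k$ and $n \leq k$, use total symmetry of $\oddsupp$-determined functions to reduce the conditions $\ess f = n$, $\qa f = n$ to nonconstancy of $f|_{\Aneq}$, and multiply the number of nonconstant maps on $\range \oddsupp|_{\Aneq}$ (of size $2^{k-1}$ resp.\ $S^k_n$) by the $\ell^{(k)_n}$ free choices of values on $A^n \setminus \Aneq$. Your explicit connectivity argument showing that an $\oddsupp$-determined partial function on $\Aneq$ with no essential variables must be constant merely fills in a detail the paper asserts without comment, so it is a refinement, not a different approach.
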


\begin{proof}
Let $f \colon A^n \to B$ be a map such that $f|_{\Aneq}$ is determined by $\oddsupp$. It is clear that then $f|_{\Aneq}$ is totally symmetric; hence, either all variables are essential in $f|_{\Aneq}$ or none of them is. In the former case, $\qa f = n$, and in the latter case $\qa f = 0$ (i.e., $f|_{\Aneq}$ is constant). Therefore $O^{k \ell}_n$ equals the number of nonconstant maps $\range \oddsupp|_{\Aneq} \to B$ multiplied by the number of maps $A^n \setminus \Aneq \to B$. By Remark~\ref{rem:oddsupp},
\[
\range \oddsupp|_{\Aneq} = \{S \subseteq A : \card{S} \equiv n \!\!\!\!\pmod{2},\, \card{S} \leq n - 2\}.
\]

Consider first the case that $n > k$. Then $\Aneq = A^n$ and there is only one map $A^n \setminus \Aneq \to B$, namely the empty map. In this case, $\range \oddsupp|_{\Aneq}$ equals the set of odd subsets of $A$ or the set of even subsets of $A$, depending on the parity of $n$. It is well-known that the number of odd subsets of $A$ equals the number of even subsets of $A$, and this number is $2^{k - 1}$. Thus $O^{k \ell}_n$ equals the number of nonconstant functions from the set of even (or odd) subsets of $A$ to $B$, which is $\ell^{2^{k - 1}} - \ell$. Note that this number does not depend on $n$.

Consider then the case that $n \leq k$. If $n = 2q$, then
\[
\card{\range \oddsupp|_{\Aneq}} = \sum_{i = 0}^{q-1} \binom{k}{2i}.
\]
If $n = 2q + 1$, then
\[
\card{\range \oddsupp|_{\Aneq}} = \sum_{i = 0}^{q-1} \binom{k}{2i + 1}.
\]
The number of maps $A^n \setminus \Aneq \to B$ is $\ell^{(k)_n}$. Thus,
\[
O^{k \ell}_n = \ell^{(k)_n} (\ell^{S^k_n} - \ell),
\]
where $S^k_n$ is as given in equation~\eqref{eq:Skn}.
\end{proof}

\begin{theorem} Let $k \geq 2$, $\ell \geq 2$, $n \geq 2$.
\begin{enumerate}[\rm (i)]
\item\label{count1} If $n > k$ and $3 \leq p \leq n$, then $G^{k \ell}_{np} = 0$.

\item\label{count2} If $n > k$ and $n \geq 4$, then
\[
G^{k \ell}_{n2} = O^{k \ell}_n = \ell^{2^{k - 1}} - \ell,
\qquad
G^{k \ell}_{n1} = U^{k \ell}_{nn} - G^{k \ell}_{n2}.
\]

\item\label{count3} If $3 \leq n \leq k$ and $3 \leq p \leq n$, then $G^{k \ell}_{np} = U^{k \ell}_{n (n-p)} V^{k \ell}_n$.

\item\label{count4} If $4 \leq n \leq k$, then 
\[
G^{k \ell}_{n2}
= U^{k \ell}_{n(n-2)} V^{k \ell}_n + O^{k \ell}_n,
\qquad
G^{k \ell}_{n1}
%= Q^{k \ell}_{n(n-1)} + Q^{k \ell}_{nn} - O^{k \ell}_n
= U^{k \ell}_{n(n-1)} V^{k \ell}_n + U^{k \ell}_{nn} \ell^{(k)_n} - V^{k \ell}_n \ell^{k^n} - O^{k \ell}_n
\]

\item\label{count5} $G^{k \ell}_{32} = (8 \ell^{(k)_3} - 3) (\ell^k - \ell)$, $G^{k \ell}_{31} = U^{k \ell}_{33} - G^{k \ell}_{33} - G^{k \ell}_{32}$.

\item\label{count6} $G^{k \ell}_{22} = \ell^{(k)_2 + 1} - \ell$, $G^{k \ell}_{21} = U^{k \ell}_{22} - G^{k \ell}_{22}$.
\end{enumerate}
\end{theorem}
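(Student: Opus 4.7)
The plan is to dispatch each case by invoking the appropriate clause of Theorem~\ref{thm:gap} to translate the condition $\gap f = p$ into a condition on $\qa f$ (and on whether $f|_{\Aneq}$ is determined by $\oddsupp$), then assemble the count from Lemmas~\ref{lemma:Qklnm} and~\ref{lem:Olmn}; the cases for $\gap f = 1$ are handled by complementation, using $U^{k\ell}_{nn} = \sum_{p \geq 1} G^{k\ell}_{np}$.

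For (i) and (ii), the key observation is that $n > k$ forces $\Aneq = A^n$, whence $\qa f = \ess f$ for every $f \colon A^n \to B$; Willard's Theorem~\ref{Willard1.2} then yields $\gap f \leq 2$, settling (i). Theorem~\ref{thm:gap}~\eqref{thmitem2} equates $\gap f = 2$ with ``$f$ is determined by $\oddsupp$'', so $G^{k\ell}_{n2} = O^{k\ell}_n$ by Lemma~\ref{lem:Olmn}, and $G^{k\ell}_{n1}$ is the remainder. For (iii), Theorem~\ref{thm:gap}~\eqref{thmitem1} gives $G^{k\ell}_{np} = Q^{k\ell}_{n(n-p)}$ for $3 \leq p \leq n$, computed by the first branch of Lemma~\ref{lemma:Qklnm} since $n - p < n$. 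For (iv), Theorem~\ref{thm:gap}~\eqref{thmitem2} splits $\{\gap f = 2\}$ into the two disjoint classes $\{\qa f = n - 2\}$ and $\{\qa f = n$ and $f|_{\Aneq}$ is determined by $\oddsupp\}$, giving $G^{k\ell}_{n2} = Q^{k\ell}_{n(n-2)} + O^{k\ell}_n$; correspondingly $\gap f = 1$ covers $\qa f = n - 1$ together with $\qa f = n$ where $f|_{\Aneq}$ is not determined by $\oddsupp$, so $G^{k\ell}_{n1} = Q^{k\ell}_{n(n-1)} + Q^{k\ell}_{nn} - O^{k\ell}_n$, which expands via Lemma~\ref{lemma:Qklnm} to the stated formula. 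In (v), $G^{k\ell}_{33} = Q^{k\ell}_{30}$ is read off from Theorem~\ref{thm:gap}~\eqref{thmitem1} with $p = n = 3$, and $G^{k\ell}_{31}$ is the complement of $G^{k\ell}_{32} + G^{k\ell}_{33}$ inside $U^{k\ell}_{33}$.

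The main obstacle is the count $G^{k\ell}_{32}$ in (v), since Theorem~\ref{thm:gap}~\eqref{thmitem3} characterizes $\gap f = 2$ by an ad hoc diagonal condition rather than via $\qa$ or $\oddsupp$. I would first show that for each $f$ meeting this condition the data $(h, i_1, i_2, i_3)$ are uniquely determined: $h$ is recovered from $f(a,a,a) = h(a)$, and the nonconstancy of $h$ then pins down each $i_j$ from the values of $f$ on the tuples with exactly two equal coordinates. Conversely, any $(h, i_1, i_2, i_3)$ with $h$ nonconstant pins down $f|_{\Aneq[3]}$ completely, leaving the $(k)_3$ positions of $A^3 \setminus \Aneq[3]$ free, so there are $8(\ell^k - \ell)\ell^{(k)_3}$ candidate functions. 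The remaining step is to excise those with $\ess f < 3$: a short case analysis rules out $\ess f = 2$ (the relevant diagonal identification cannot turn a genuinely binary function into a unary one) and $\ess f = 0$ (that would force $h$ constant), while $\ess f = 1$ occurs precisely when $f(x_1, x_2, x_3) = f^*(x_j)$ for some nonconstant unary $f^*$ and some $j \in \{1,2,3\}$, forcing $h = f^*$ and $(i_1, i_2, i_3) \in \{(1,0,0), (0,1,0), (0,0,1)\}$. This contributes $3(\ell^k - \ell)$ exceptions, yielding $G^{k\ell}_{32} = (8\ell^{(k)_3} - 3)(\ell^k - \ell)$.

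Finally, (vi) is treated by a direct argument without Theorem~\ref{thm:gap}: for $n = 2$ and $\ess f = 2$, the only simple minor strictly below $f$ up to equivalence is the unary $x \mapsto f(x,x)$, so $\gap f = 2$ is equivalent to $f(x,x)$ being constant in $x$. Counting such $f$ yields $\ell \cdot \ell^{(k)_2} = \ell^{(k)_2 + 1}$ functions (one free diagonal constant and $(k)_2$ free off-diagonal values), of which the $\ell$ global constants are the only ones with $\ess f < 2$; the formula for $G^{k\ell}_{21}$ is then the complement within $U^{k\ell}_{22}$.
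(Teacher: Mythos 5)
Your proposal is correct and follows essentially the same route as the paper: cases (i)--(iv) are dispatched through Theorem~\ref{thm:gap} together with Lemmas~\ref{lemma:Qklnm} and~\ref{lem:Olmn} (with the same splitting of $\gap f=2$ and the same complementation for $\gap f=1$), case (v) is handled by the same count $8(\ell^k-\ell)\ell^{(k)_3}$ of functions meeting the condition of Theorem~\ref{thm:gap}~\eqref{thmitem3} minus the $3(\ell^k-\ell)$ essentially unary ones, and case (vi) by the same observation that $\gap f=2$ means a constant diagonal. The only differences are cosmetic: you spell out the recovery of $(h,i_1,i_2,i_3)$ from $f|_{\Aneq[3]}$ and the exclusion of $\ess f\in\{0,2\}$, which the paper leaves as a brief verification.
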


\begin{proof}
\eqref{count1} Follows from Theorem~\ref{Willard1.2}.

\eqref{count2} If $f \colon A^n \to B$ depends on all of its variables and $n > k$, then by Remark~\ref{rem:an} $\qa f = \ess f = n$. Thus $\gap f = 2$ if and only if $f|_{\Aneq} = f$ is determined by $\oddsupp$. Thus, $G^{k \ell}_{n2} = O^{k \ell}_n = \ell^{2^{k - 1}} - \ell$ by Lemma~\ref{lem:Olmn}. The equality for $G^{k \ell}_{n1}$ follows immediately from~\eqref{count1} and the equality for $G^{k \ell}_{n2}$.

\eqref{count3} By Theorem~\ref{thm:gap}~\eqref{thmitem1}, for $3 \leq n \leq k$ and $3 \leq p \leq n$, we have $G^{k \ell}_{n p} = Q^{k \ell}_{n (n-p)}$, and $Q^{k \ell}_{n (n-p)} = U^{k \ell}_{n (n-p)} V^{k \ell}_n$ by Lemma~\ref{lemma:Qklnm}.

\eqref{count4} By Theorem~\ref{thm:gap}, and Lemma~\ref{lemma:Qklnm}, for $n \geq 4$, we have
\[
G^{k \ell}_{n2}
= Q^{k \ell}_{n(n-2)} + O^{k \ell}_n
= U^{k \ell}_{n(n-2)} V^{k \ell}_n + O^{k \ell}_n
\]
and
\[
G^{k \ell}_{n1}
= Q^{k \ell}_{n(n-1)} + Q^{k \ell}_{nn} - O^{k \ell}_n
= U^{k \ell}_{n(n-1)} V^{k \ell}_n + U^{k \ell}_{nn} \ell^{(k)_n} - V^{k \ell}_n \ell^{k^n} - O^{k \ell}_n.
\]

\eqref{count5}
We apply Theorem~\ref{thm:gap}~\eqref{thmitem3} in order to determine $G^{k \ell}_{32}$. It is easy to verify that given nonconstant functions $h, h' \colon A \to B$, elements $i_1, i_2, i_3, i'_1, i'_2, i'_3 \in \{0, 1\}$ and functions $f, f' \colon A^3 \to B$ such that
\begin{align*}
f(x_1, x_0, x_0) &= h(x_{i_1}), &
f(x_0, x_1, x_0) &= h(x_{i_2}), &
f(x_0, x_0, x_1) &= h(x_{i_3}) \\
f'(x_1, x_0, x_0) &= h'(x_{i'_1}), &
f'(x_0, x_1, x_0) &= h'(x_{i'_2}), &
f'(x_0, x_0, x_1) &= h'(x_{i'_3}),
\end{align*}
it holds that $f|_{\Aneq[3]} = f'|_{\Aneq[3]}$ if and only if $h = h'$, $i_1 = i'_1$, $i_2 = i'_2$, $i_3 = i'_3$.

There are $2^3 = 8$ choices for $(i_1, i_2, i_3)$, there are $\ell^k - \ell$ nonconstant maps $h \colon A \to B$, and there are $\ell^{(k)_3}$ ways to choose values for a function on $A^3 \setminus \Aneq[3]$. Thus the number of functions of the form given in Theorem~\ref{thm:gap}~\eqref{thmitem3} is
\[
8 (\ell^k - \ell) \ell^{(k)_3}.
\]
However, some of the functions corresponding to Theorem~\ref{thm:gap}~\eqref{thmitem3} are not essentially ternary, and we have to subtract the number of these functions from the above number. We claim that $f \colon A^3 \to B$ satisfies the condition of Theorem~\ref{thm:gap}~\eqref{thmitem3} and $\ess f < 3$ if and only if $\ess f = 1$. For, every essentially unary function $f \colon A^3 \to B$ satisfies the condition of Theorem~\ref{thm:gap}~\eqref{thmitem3} with $(i_1, i_2, i_3) \in \{(1,0,0), (0,1,0), (0,0,1)\}$ and $h(x) = f(x,x,x)$. Conversely, suppose that $f$ satisfies the condition of Theorem~\ref{thm:gap}~\eqref{thmitem3} and $\ess f < 3$, say, the last variable of $f$ is inessential. Then we have
\[
f(x_0, x_1, x_2) = f(x_0, x_1, x_0) = h(x_{i_2}),
\]
i.e., $f$ is equivalent to the nonconstant unary function $h$.

The number of essentially unary ternary functions is $3 (\ell^k - \ell)$; hence
\[
G^{k \ell}_{32}
= 8 (\ell^k - \ell) \ell^{(k)_3} - 3(\ell^k - \ell)
= (8 \ell^{(k)_3} - 3) (\ell^k - \ell).
\]

It is clear that
\[
G^{k \ell}_{31} = U^{k \ell}_{33} - G^{k \ell}_{33} - G^{k \ell}_{32}.
\]

\eqref{count6} For $f \colon A^2 \to B$, $\gap f = 2$ if and only if $f|_{\Aneq[2]}$ is constant (but $f$ itself is not constant). Thus $G^{k \ell}_{22} = \ell^{(k)_2 + 1} - \ell$. It is clear that $G^{k \ell}_{21} = U^{k \ell}_{22} - G^{k \ell}_{22}$.
\end{proof}

We have evaluated $G^{k \ell}_{np}$ for some values of $k$, $\ell$, $n$, $p$ in Table~\ref{table:Gklnp}.

\begin{table}
\begin{tabular}{|c|c|c|r|r|r|r|r|r|}
\hline
$k$ & $\ell$ & $n$ & \multicolumn{1}{c|}{$U^{k \ell}_{nn}$} & \multicolumn{1}{c|}{$G^{k \ell}_{n1}$} & \multicolumn{1}{c|}{$G^{k \ell}_{n2}$} & \multicolumn{1}{c|}{$G^{k \ell}_{n3}$} & \multicolumn{1}{c|}{$G^{k \ell}_{n4}$} & \multicolumn{1}{c|}{$G^{k \ell}_{n5}$} \\
\hline
$2$ & $2$ & $2$ &         $10$ &          $4$ &  $6$ & --- & --- & --- \\
    &     & $3$ &        $218$ &        $208$ & $10$ & $0$ & --- & --- \\
    &     & $4$ &      $64594$ &      $64592$ &  $2$ & $0$ & $0$ & --- \\
    &     & $5$ & $4294642034$ & $4294642032$ &  $2$ & $0$ & $0$ & $0$ \\
\hline
$3$ & $3$ & $2$ &              $19632$ &              $17448$ &   $2184$ &    --- & --- & --- \\
    &     & $3$ &      $7625597426016$ &      $7625597283936$ & $139896$ & $2184$ & --- & --- \\
    &     & $4$ &  $4.4 \cdot 10^{38}$ &  $4.4 \cdot 10^{38}$ &     $78$ &    $0$ & $0$ & --- \\
    &     & $5$ & $8.7 \cdot 10^{115}$ & $8.7 \cdot 10^{115}$ &     $78$ &    $0$ & $0$ & $0$ \\
\hline
$4$ & $4$ & $2$ &         $4294966788$ &         $4227857928$ &          $67108860$ &                 --- & --- & --- \\
    &     & $3$ &  $3.4 \cdot 10^{38}$ &  $3.4 \cdot 10^{38}$ & $5.7 \cdot 10^{17}$ & $1.1 \cdot 10^{15}$ & --- & --- \\
    &     & $4$ & $1.3 \cdot 10^{154}$ & $1.3 \cdot 10^{154}$ & $7.3 \cdot 10^{24}$ & $2.8 \cdot 10^{17}$ & $1.1 \cdot 10^{15}$ & --- \\
    &     & $5$ & $3.2 \cdot 10^{616}$ & $3.2 \cdot 10^{616}$ &             $65532$ &                 $0$ & $0$ & $0$ \\
\hline    
\end{tabular}
\bigskip
\caption{$G^{k \ell}_{np}$ for small values of $k$, $\ell$, $n$, $p$.}
\label{table:Gklnp}
\end{table}

%%%%%%%%%%%%%%%%%%%%%%%%%%%%
%%%%% Acknowledgements %%%%%
%%%%%%%%%%%%%%%%%%%%%%%%%%%%

\section*{Acknowledgements}

An idea of a decomposition scheme of functions based on the arity gap and the problem of counting the number of operations $f \colon A^n \to A$ with $\ess f = n$ and $\gap = p$, for every $n$ and $p$, were presented by Shtrakov and Koppitz~\cite{SK}. However, the results presented in~\cite{SK} seem to be inconclusive or incorrect.

%%%%%%%%%%%%%%%%%%%%%%%%%%%%%%%%
%%%%%     Bibliography     %%%%%
%%%%%%%%%%%%%%%%%%%%%%%%%%%%%%%%

\end{document}